\newtheorem{theorem}{Theorem}
\newtheorem{example}{Example}
 \newtheorem{corollary}{Corollary}
\newtheorem{definition}{Definition}
\newtheorem{remark}{Remark}
\newtheorem{proposition}{Proposition}
\newtheorem{lemma}{Lemma}
\newenvironment{proof}[1][Proof]{\noindent\textbf{#1.} }{\ \rule{0.5em}{0.5em}}
\tikzset{
bicolor/.style 2 args={
  thick,dashed,dash pattern=on 3pt off 3pt,#1,
  postaction={draw,dashed,thick,dash pattern=on 3pt off 3pt,#2,dash phase=3pt}
  },
swiggle/.style={
    -stealth,decorate,decoration={snake,amplitude=3pt,pre length=2pt,post length=3pt}
  }
}
\begin{document}

\title{Continuity of the roots of a nonmonic polynomial and applications in multivariate stability theory}
\markright{Root continuity of nonmonic polynomials}
\author{Anthony Stefan and Aaron Welters}
\author{Anthony Stefan\footnote{Email: astefan2015@my.fit.edu}\; and Aaron Welters\footnote{Email: awelters@fit.edu}\\Florida Institute of Technology\\Department of Mathematical Sciences\\Melbourne, FL, USA}
\date{\today}

\maketitle
\sloppy

\begin{abstract}
We study continuity of the roots of nonmonic polynomials as a function of their coefficients using only the most elementary results from an introductory course in real analysis and the theory of single variable polynomials. Our approach gives both qualitative and quantitative results in the case that the degree of the unperturbed polynomial can change under a perturbation of its coefficients, a case that naturally occurs, for instance, in stability theory of polynomials, singular perturbation theory, or in the perturbation theory for generalized eigenvalue problems. An application of our results in multivariate stability theory is provided which is important in, for example, the study of hyperbolic polynomials or realizability and synthesis problems in passive electrical network theory, and will be of general interest to mathematicians as well as physicists and engineers.
\end{abstract}

 The continuity of the roots of a \emph{monic} polynomial on its coefficients (belonging to the complex numbers $\mathbb{C}$) is usually proved in three different ways. The first is an elementary real analysis approach \cite{77US} using Bolzano-Weierstrass Theorem \cite[Theorem 2.42]{76WR}. The second approach \cite[Theorem 1.4]{66MM} uses Rouch\'{e}'s Theorem from complex analysis which is based on contour integration and the argument principle \cite[Theorem 1.3]{66MM}. The third uses a topology approach \cite{89CC, 20KH} which is based on the construction of a homeomorphism between the coefficients of the monic polynomial and its roots.
 
 Unfortunately, the simplest and most elementary approach from \cite{77US} does not apply for \emph{nonmonic} polynomials, which occurs in many important areas such as in singular perturbation theory \cite{99BO}, perturbation theory for generalized eigenvalue problems \cite{90SS}, and in the stability theory of polynomials and its applications \cite{90SB, 87FBII, 09BBI, 09BBII, 77NB, 17NB, 07PB, 04CO, 86AF, 84FL, 87FBI, 68TK, 91AK, 11RP, 11DW}. Furthermore, the qualitative and quantitative results for nonmonic polynomials have some distinctions (essentially due to the possibility of a difference between the degree of the perturbed polynomial and the degree of unperturbed polynomial) and certain modifications are needed. Moreover, although previous results \cite[Chap. 1, Sec. 1, Exercise 13]{66MM} and \cite[Theorem 3]{89CC} do have statements comparable to ours (cf.\ Theorem \ref{ThmRootSepincludingInfty}), they are proved using either Rouch\'{e}'s theorem \cite[Theorem 1]{65MZ} or by the topology approach \cite[Theorem 2]{89CC} and without any comparable quantitative result like ours (see Theorem \ref{ThmQuantOstrowskiLike}).
 
 This article aims to do the following:
\begin{itemize}
    \item Give an elementary approach to proving continuity of the roots of a nonmonic polynomial on its coefficients that an undergraduate student whose taken an introductory real analysis course could do the proof. 
    \item Provide both qualitative and quantitative results, in an elementary and self-contained manner, which can be used to introduce the reader to modern research in stability theory of multivariate polynomials and its applications.
    \item Include examples and visual illustrations of the analysis so that it is even more accessible to the reader.
\end{itemize} 

The rest of the article is organized as follows. In Section \ref{Qualitative Section} on the qualitative results, we introduce our notation for the unperturbed and perturbed polynomials (denoted by $q$ and $p$, respectively) and provide the elementary properties of single variable polynomials that are needed. Then we state and prove the main theorem of this article, namely, Theorem \ref{Thm1} along with two immediate corollaries (Corollaries \ref{CorPHasExactlymRoots} and \ref{cor2}). As our polynomials can be nonmonic, there is the possibility of some of the roots of the perturbed polynomial converging to $\infty$ and the main result in this regard is Theorem \ref{ThmRootSepincludingInfty}. We prove this after introducing the notation of a reciprocal polynomial to a given polynomial (Definition \ref{DefReciprocalPoly}) and proving some preliminary results using this (Proposition \ref{Prop1AtLeastZeroRoots} and Corollary \ref{Cor3HasExactlymRoots}). All these results are illustrated in Figures \ref{Fig1} and \ref{Fig2} with a summary given in Remark \ref{RemRootGroupings} which is illustrated in Figure \ref{FigRemarkConclusion}. 

Next, in Section \ref{SecQuantitativeResults} on our quantitative results, we begin with the main theorem for that section, namely, Theorem \ref{ThmQuantOstrowskiLike}. We immediately follow this up with an example (Example \ref{ExNumericalOfQuantitativeMainThm}) to help the reader understand the theorem. At the end of that section we provide some preliminary results (Lemma \ref{LemRootUpperBoundMonicPoly} and Proposition \ref{PropPrimPertBddMonicPolys}) needed to prove this theorem. 

In our final section, Section \ref{SecApplications}, we discuss the applications of our results in stability theory for multivariate polynomials that begins with notation and basic properties of multivariable polynomials and a concrete example (Example \ref{ExMultvarPolyNotation}) to help the reader understand this notation. Next, we give our main theorem (Theorem \ref{11RPThm}) of this section from \cite{11RP}, its proof using only the results presented in our paper, and conclude with a special case of the proof for two-variables (see Example \ref{Ex4m=2}) whose details are illustrated in Figures \ref{firstfigz1z2plane} and \ref{firstfigz1z2plane2}.

\section{Qualitative Results}\label{Qualitative Section}
Let $P_n(\mathbb{C})$ denote the set of all polynomials with coefficients in the complex numbers $\mathbb{C}$ whose degree is less than or equal to a positive integer $n$ (i.e., $n\in\mathbb{N}$) and include the zero polynomial. In particular, any two polynomials $p(z)$, $q(z)\in P_n(\mathbb{C})$ can be expressed in the following form:
\begin{align}
    p(z)&= a_nz^n + a_{n-1}z^{n-1} + \cdots + a_1z+a_0, \label{Def_Poly_p}\\
    q(z)&= b_nz^n + b_{n-1}z^{n-1} + \cdots + b_1z+b_0, \label{Def_Poly_q}
\end{align}
for some $a_i, b_i\in \mathbb{C}$ and $i=0,\ldots, n$. The degree of a nonzero polynomial $p$, denoted by $\deg p$, is defined in terms of its coefficients (\ref{Def_Poly_p}) by
\begin{align}
    \deg p=\max\{i:a_i\not = 0\}.
\end{align}

To prove our main theorem below, we will use the following basic results in analysis and the theory of polynomials in one variable:
\begin{enumerate}
    \item If $p(z)$ has $\deg p\geq 1$ then it has exactly $\deg p$ roots $\lambda_1, \ldots, \lambda_{\deg p}$ (counting multiplicities) and
    \begin{align}
        p(z)&=a_{\deg p}\prod_{i=1}^{\deg p}(z-\lambda_i), \label{ProdDegreeFactoriization}\\
        \frac{p'(z)}{p(z)}&=\sum_{i=1}^{\deg p}\frac{1}{z-\lambda_i}=\frac{na_nz^{n-1}+\cdots +a_1}{a_nz^n + a_{n-1}z^{n-1} + \cdots + a_1z+a_0}\;. \label{SumDegreeFactoriization}
    \end{align}
    \item If $a_i\to b_i$ for all $i=0,\ldots, n$ then for each fixed $z\in\mathbb{C}$, 
    \begin{align}
        &p(z)\to q(z),\;\;p'(z)\to q'(z), \text{ and } \notag\\
        &\frac{p'(z)}{p(z)}\to  \frac{q'(z)}{q(z)}, \text{ provided } q(z)\not = 0.\label{convervingfrompp'toqq'}
    \end{align}
    \item Every bounded sequence in $\mathbb{C}^N$ has a convergent subsequence (for any fixed $N\in\mathbb{N}$).
\end{enumerate}

\begin{theorem}[Main Theorem]\label{Thm1}
Suppose $\zeta$ is a root of $q(z)$ of multiplicity $m$ and let $\varepsilon >0$ be given. Then there exists a $\delta >0$ such that if $\max_{0\leq i\leq n}|a_i-b_i|<\delta$ then $p(z)$ has at least $m$ roots whose distance from $\zeta$ is less than $\varepsilon$.
\end{theorem}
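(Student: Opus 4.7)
The plan is to argue by contradiction using fact (3) (Bolzano-Weierstrass) and the logarithmic derivative identity (\ref{SumDegreeFactoriization}). Suppose the conclusion fails: there exist $\varepsilon_0 > 0$ and a sequence of polynomials $p_k(z) = \sum_{i=0}^n a_i^{(k)} z^i$ with $a_i^{(k)} \to b_i$ for each $i$, yet each $p_k$ has at most $m - 1$ roots (counted with multiplicity) in the open disk $D := \{z : |z - \zeta| < \varepsilon_0\}$. I would first shrink $\varepsilon_0$ so that $\zeta$ is the only root of $q$ in $\overline{D}$. By pigeonhole I may pass to a subsequence on which $\deg p_k = N$ is constant and each $p_k$ has exactly $s \leq m - 1$ roots $\mu_1^{(k)}, \ldots, \mu_s^{(k)}$ in $D$. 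These lie in the compact set $\overline{D}$, so by (3) I may further assume $\mu_j^{(k)} \to \mu_j^{\ast} \in \overline{D}$. Passing to the limit in $p_k(\mu_j^{(k)}) = 0$, using that $\mu_j^{(k)}$ is bounded and the coefficients converge, gives $q(\mu_j^{\ast}) = 0$, and hence $\mu_j^{\ast} = \zeta$ for every $j$.

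Next I would pick $r \in (0, \varepsilon_0/2)$ and set $z_0 := \zeta + r$, so that $q(z_0) \neq 0$ and (for $k$ large) $p_k(z_0) \neq 0$. By (\ref{convervingfrompp'toqq'}), $p_k'(z_0)/p_k(z_0) \to q'(z_0)/q(z_0)$. Denote the remaining $N - s$ roots of $p_k$ by $\nu_1^{(k)}, \ldots, \nu_{N-s}^{(k)}$; they lie outside $D$, so $|\nu_j^{(k)} - \zeta| \geq \varepsilon_0$. Applying (\ref{SumDegreeFactoriization}) yields
\begin{align*}
\frac{p_k'(z_0)}{p_k(z_0)} = \sum_{j=1}^{s}\frac{1}{z_0 - \mu_j^{(k)}} + \sum_{j=1}^{N-s}\frac{1}{z_0 - \nu_j^{(k)}},
\end{align*}
where the inner sum tends to $s/r$ (since $\mu_j^{(k)} \to \zeta$), while the outer sum is uniformly bounded in absolute value by $(N-s) \cdot 2/\varepsilon_0 \leq 2n/\varepsilon_0$, thanks to $|z_0 - \nu_j^{(k)}| \geq \varepsilon_0 - r \geq \varepsilon_0/2$. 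Applying (\ref{SumDegreeFactoriization}) to $q$ analogously gives $q'(z_0)/q(z_0) = m/r + \beta(r)$ with $|\beta(r)| \leq 2n/\varepsilon_0$, using that the other roots of $q$ lie outside $\overline{D}$.

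Passing $k \to \infty$ and equating the two expressions forces $|(m - s)/r| \leq 4n/\varepsilon_0$, independently of the choice of $r \in (0, \varepsilon_0/2)$. Since $m - s \geq 1$, taking $r < \varepsilon_0/(4n)$ yields the desired contradiction. The main obstacle, absent in the monic case, is that some of the outer roots $\nu_j^{(k)}$ may escape to infinity when $\deg q < \deg p_k$, so Bolzano-Weierstrass cannot be applied to all the roots of $p_k$. The argument circumvents this by observing that only the reciprocal distances $1/(z_0 - \nu_j^{(k)})$ enter the logarithmic derivative, and each is controlled purely by the distance from $z_0$ to $\nu_j^{(k)}$, which stays at least $\varepsilon_0/2$; the magnitudes $|\nu_j^{(k)}|$ themselves play no role.
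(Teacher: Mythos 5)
Your proof is correct and, while it shares the paper's overall strategy (contradiction, extracting a convergent subsequence of roots, and comparing logarithmic derivatives via (\ref{SumDegreeFactoriization}) and (\ref{convervingfrompp'toqq'})), it executes the key comparison in a genuinely different and somewhat tighter way. The paper applies Bolzano--Weierstrass to the full $N$-tuple of roots of $p_{k_j}$, which requires passing to the compactified space $(\mathbb{C}\cup\{\infty\})^N$ (a step that quietly goes beyond the literal statement of fact 3, which is about bounded sequences in $\mathbb{C}^N$), and then derives the contradiction through a double limit $\lim_{z\to\zeta}\lim_{j\to\infty}$ applied to the ratio $\bigl[(z-\zeta)p'_{k_j}/p_{k_j}\bigr]/\bigl[(z-\zeta)q'/q\bigr]$. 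You instead apply Bolzano--Weierstrass only to the (automatically bounded) roots lying in $\overline{D}$, and you dispose of the far roots by observing that their reciprocal distances $1/(z_0-\nu_j^{(k)})$ are uniformly bounded by $2/\varepsilon_0$ -- so no compactification is needed. You then evaluate the logarithmic derivative at a single well-chosen point $z_0=\zeta+r$, split into the near contribution ($\to s/r$) and the bounded far contribution, do the same for $q$ (getting $m/r + O(2n/\varepsilon_0)$), and derive the contradiction by sending $r\to 0$. This replaces the paper's iterated limit with an explicit one-parameter family of test points and turns the qualitative counting into a quantitative inequality $|m-s|/r\leq 4n/\varepsilon_0$. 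Both proofs are valid; yours is more elementary in its use of compactness and makes the treatment of roots escaping to infinity fully transparent, which is exactly the delicate feature distinguishing the nonmonic from the monic case.
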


\begin{proof}
Suppose not. Then, for each $k\in \mathbb{N}$ there exists a polynomial $p_k= a_n^{(k)}z^n + a_{n-1}^{(k)}z^{n-1} + \cdots + a_{1}^{(k)}z + a_0^{(k)}$ such that $p_k$ has at most $m-1$ roots whose distance from $\zeta$ is less then $\varepsilon$ and
\begin{align}
    \lim_{k\to \infty}a_i^{(k)}= b_i\qquad (0\leq i\leq n).
\end{align}
In particular, by hypotheses ($q$ has at least one root, namely, $\zeta$, with finite multiplicity $m$) we have $\deg q\geq 1$ and hence 
\begin{align}
    a_{\deg q}^{(k)} \to b_{\deg q}\not = 0 \text{ as } k \to \infty,
\end{align}
implying there exists a $K\in\mathbb{N}$ such that
\begin{align}
    \deg p_k\geq \deg q\geq 1
\end{align}
for all $k\geq K$. Also, since $\deg p_k\in \{\deg q,\ldots, n\}$, then there exists a $N\in \{\deg q,\ldots, n\}$ such that $\deg p_k=N$ for infinitely many $k$. Hence, by choosing a subsequence if necessary, we can assume that we also have $\deg p_k=N$ for every $k\geq K$. Now, for each integer $k\geq K$, let $\lambda_i^{(k)}$, $i=1,\ldots, N$ denote all the roots of $p_k$ (counting multiplicities). Then, statement 3 above implies the sequence $\{(\lambda_1^{(k)}, \ldots , \lambda_N^{(k)})\}$ has a subsequence $\{(\lambda_1^{(k_j)}, \ldots , \lambda_N^{(k_j)})\}$ converging to a point $(\mu_1,\ldots, \mu_N)$ in $(\mathbb{C} \cup \{\infty\})^N$. In particular, it follows that the number of $i$'s such that $\mu_i=\zeta$ must be at most $m-1$. This together with (\ref{SumDegreeFactoriization}) and (\ref{convervingfrompp'toqq'}) implies that
\begin{align}
    1=\lim_{z\to \zeta}\lim_{j\to \infty} \frac{(z-\zeta)\frac{p_{k_j}'(z)}{p_{k_j}(z)}}{(z-\zeta) \frac{q'(z)}{q(z)}}=\frac{\# \text{ of } i\text{'s such that }\mu_i=\zeta}{m}\leq \frac{m-1}{m},
\end{align}
a contradiction. This proves the theorem.
\end{proof}

\begin{figure}[!]
\begin{tabular}{cc}
    \begin{tikzpicture}[scale=1]
        \draw (0,-3) node {$(i=0,\ldots,n)$};
        \draw[dashed] [color=black] (0,0) circle (1.5);
        \node[right] (0,0) {$b_i$};
        \path [cyan,bend left, dashed]   (-1.11803398875,0) edge (-1,0.5);
        \draw [color=black, fill=black] (0,0)circle(0.05);
        \draw[cyan] (0,0) -- (-1,0.5) node[above] {\textcolor{black}{$a_i$}};
        \draw [pen colour={cyan}, decorate,
        decoration = {calligraphic brace, amplitude=6pt}] (-1,0.5)-- (0,0);
        \draw [color=black, fill=black] (-1,0.5)circle(0.05);
        \draw (-0.75,0) node[below] {$\delta$};
        \draw (0,0) -- (-1.5,0);
        \draw (-0.4,0.6) node[right] {\textcolor{cyan}{$|a_i-b_i|$}};
        \draw (0,2.5) node {$B(b_i,\delta)$};
        \draw (-0.3,-4) node {\textbf{(a)}};
    \end{tikzpicture}  
& 
        \begin{tikzpicture}[scale=0.9]
            \draw[dashed] (6,0)circle[radius=2];
            \draw (6,0) node[left] {$\zeta$} -- (8,0);
            \draw [color=black, fill=black] (6,0)circle(0.05);
            \draw (7,0) node[above] {$\varepsilon$};
            \draw [color=black, fill=black] (5.7,-1)circle(0.05) node[right]{$\lambda_{m}$};
            \draw [color=black, fill=black] (5.15,0.5)circle(0.05) node[left]{$\lambda_{1}$};
            \draw [color=black, fill=black] (5.35,-0.8)circle(0.02);
            \draw [color=black, fill=black] (5.08,-0.38)circle(0.02);
            \draw [color=black, fill=black] (5.08,0.1)circle(0.02);
            \draw[swiggle] { (8.37 ,-1.2 ) ++ (0 ,0 ) } -- (7.8,-1.9);
            \draw (8.37,-1.2) node[yshift=0.2cm, xshift=0.1cm] {$\lambda_{k}$};
            \draw (7.8,-1.9) node[below, yshift=-0.1cm, xshift=-0.1cm] {?};
            \draw[swiggle] {(8.26 ,-1 ) ++ (0 ,0 )} -- (7.3,-0.7); 
            \draw (7.3,-0.7) node[left, xshift=-0.1cm] {?};
            \draw [color=black, fill=black] (7.76,-1.97)circle(0.05);
            \draw [color=black, fill=black] (7.2,-0.68)circle(0.05);
            \draw (6,-3) node {$(m< k \leq \deg p)$};
            \draw (6,2.75) node {$B(\zeta,\varepsilon)$};
            \draw (6,-4) node {\textbf{(b)}};
        \end{tikzpicture} \vspace{0.25cm}
\\
    \begin{tikzpicture}[scale=0.9]
        \draw[dashed] (6,0)circle[radius=2];
        \draw (6,0) node[left] {$\zeta$} -- (8,0);
        \draw [color=black, fill=black] (6,0)circle(0.05);
        \draw (7,0) node[above] {$\varepsilon$};
        \draw [color=black, fill=black] (5.7,-1)circle(0.05) node[right]{$\lambda_{m}$};
        \draw [color=black, fill=black] (5.15,0.5)circle(0.05) node[left]{$\lambda_{1}$};
        \draw [color=black, fill=black] (5.35,-0.8)circle(0.02);
        \draw [color=black, fill=black] (5.08,-0.38)circle(0.02);
        \draw [color=black, fill=black] (5.08,0.1)circle(0.02);
        \draw[swiggle] {(8.37 ,-1.2 ) ++ (0 ,0 )} node[yshift=0.2cm, xshift=0.1cm] {$\lambda_{k}$} -- (7.8,-1.9);
        \draw (6,-4.6) node {$(m< k \leq \deg p)$};
        \draw [color=black, fill=black] (7.76,-1.97)circle(0.05);
       \draw (6,2.5) node {$B(\zeta,\varepsilon)$};
        \draw (6,-5.5) node {\textbf{(c)}};
    \end{tikzpicture} 
  & \hspace{0.5cm}
        \begin{tikzpicture}[scale=0.9]
            \draw (0,2.5) node {$B(\zeta_j,\varepsilon)$};
            \draw[dashed] [color=black] (0,0) circle (2);
            \node[left] (0,0) {$\zeta_j$};
            \draw [color=black, fill=black] (0,0)circle(0.05);
            \draw (1,0) node[above] {$\varepsilon$};
            \draw (0,0) -- (2,0);
            \draw [color=black, fill=black] (-0.85,-1.2)circle(0.05) node[right]{$\lambda_{j,m_j}$};
            \draw [color=black, fill=black] (-1.25,0.75)circle(0.05) node[right]{$\lambda_{j,1}$};
            \draw [color=black, fill=black] (-1.45,0.35)circle(0.02);
            \draw [color=black, fill=black] (-1.5,-0.2)circle(0.02);
            \draw [color=black, fill=black] (-1.25,-0.75)circle(0.02);
            \draw[dashed] (3.5,-2.25)circle[radius=2];
            \draw (3.5,0.5) node {$B(\zeta_i,\varepsilon)$};
            \draw (1.35,-2.5) node[left] {\textcolor{cyan}{$|\zeta_i-\zeta_j|$}};
            \draw (3.5,-2.25) node[below] {$\zeta_i$} -- (5.55,-2.25);
            \draw[cyan] (3.5,-2.25) -- (0,0);
            \path [cyan,bend right, dashed]   (3.5,-2.25) edge (4.19,0);
            \draw[cyan] (2,0) -- (4.19,0);
            \draw [color=black, fill=black] (3.5,-2.25)circle(0.05);
            \draw (4.55,-2.25) node[above] {$\varepsilon$};
            \draw [color=black, fill=black] (3.8,-3.525)circle(0.05) node[right]{$\lambda_{i,m_i}$};
            \draw [color=black, fill=black] (2.05,-2.5)circle(0.05) node[right]{$\lambda_{i,1}$};
            \draw [color=black, fill=black] (2.35,-2.9)circle(0.02);
            \draw [color=black, fill=black] (2.7,-3.3)circle(0.02);
            \draw [color=black, fill=black] (3.25,-3.45)circle(0.02);
            \draw [pen colour={cyan}, decorate, decoration = {calligraphic brace, amplitude=20pt}] (3.5,-2.25) -- (0,0);
            \draw[color=cyan] (1.375,-1.72) -- (1.23,-1.95);
            \draw (1,-4.6) node {$(1\leq i,j \leq d, i\not=j)$};
             \draw (1,-5.5) node {\textbf{(d)}};
        \end{tikzpicture}
\end{tabular} \caption{This figure illustrates the geometry of the statements for Theorem \ref{Thm1}, Corollary \ref{CorPHasExactlymRoots}, and Corollary \ref{cor2}. \textbf{(a)} The $\delta$-ball $B(b_i,\delta)$ centered at $b_i$, the $i$th coefficient of the polynomial $q(z)=b_nz^n+\cdots + b_1z+b_0$, contains the $i$th coefficient $a_i$ of the polynomial $p(z)=a_nz^n+\cdots+a_1z+a_0$, for each $i=0,\ldots, n$. \textbf{(b)} The $\varepsilon$-ball $B(\zeta,\varepsilon)$ with $\varepsilon>0$ given, is centered at a given root $\zeta$ of $q(z)$ with multiplicity $m$. Theorem \ref{Thm1} guarantees there exists $\delta>0$ such that if the coefficients of $p(z)$ and $q(z)$ are as in (a) then $B(\zeta,\varepsilon)$ contains $m$ roots of $p(z)$, which we denote by $\lambda_1,\ldots, \lambda_m$, but could possibly contain one or more of its remaining roots $\lambda_k$ for $m<k\leq \deg p$. \textbf{(c)} Corollary \ref{CorPHasExactlymRoots} guarantees in the case $\deg q=m$ or in the case $\deg q>m$, after constraining $\varepsilon$ so that $0<\varepsilon <\min_{1\leq j\leq d, \zeta_j\not=\zeta}|\zeta_j-\zeta| $, and possibly with a smaller $\delta>0$ from Theorem \ref{Thm1}, that there is exactly $m$ roots of $p(z)$, which we again denote by $\lambda_1, \ldots, \lambda_m$, that are in $B(\zeta,\varepsilon)$, while the remaining roots $\lambda_k$ for $m<k\leq \deg p$ of $p(z)$ lie outside this ball. \textbf{(d)} Corollary \ref{cor2} guarantees in the case $1<d$ that after constraining $\varepsilon$ further so that $0<\varepsilon <\min_{1\leq i,j\leq d, i\not=j}|\zeta_i-\zeta_j| $ and possibly with a smaller $\delta>0$ from Corollary \ref{CorPHasExactlymRoots}, that for each root $\zeta_j$ of $q(z)$ with multiplicity $m_j$ there is exactly $m_j$ roots of $p(z)$, which we denote by $\lambda_{j,1}, \ldots, \lambda_{j,m_j}$ in $B(\zeta_j,\varepsilon)$ and all the rest of its roots lie exterior to it. [Figure 1.(d) illustrates the case that $\varepsilon<\frac{1}{2}\min_{1\leq i,j\leq d, \zeta_j\not=\zeta_i}|\zeta_i-\zeta_j|$ and hence $B(\zeta_i,\varepsilon)\cap B(\zeta_j,\varepsilon)=\emptyset$ for all $i\not= j$.]} \label{Fig1}
\end{figure}
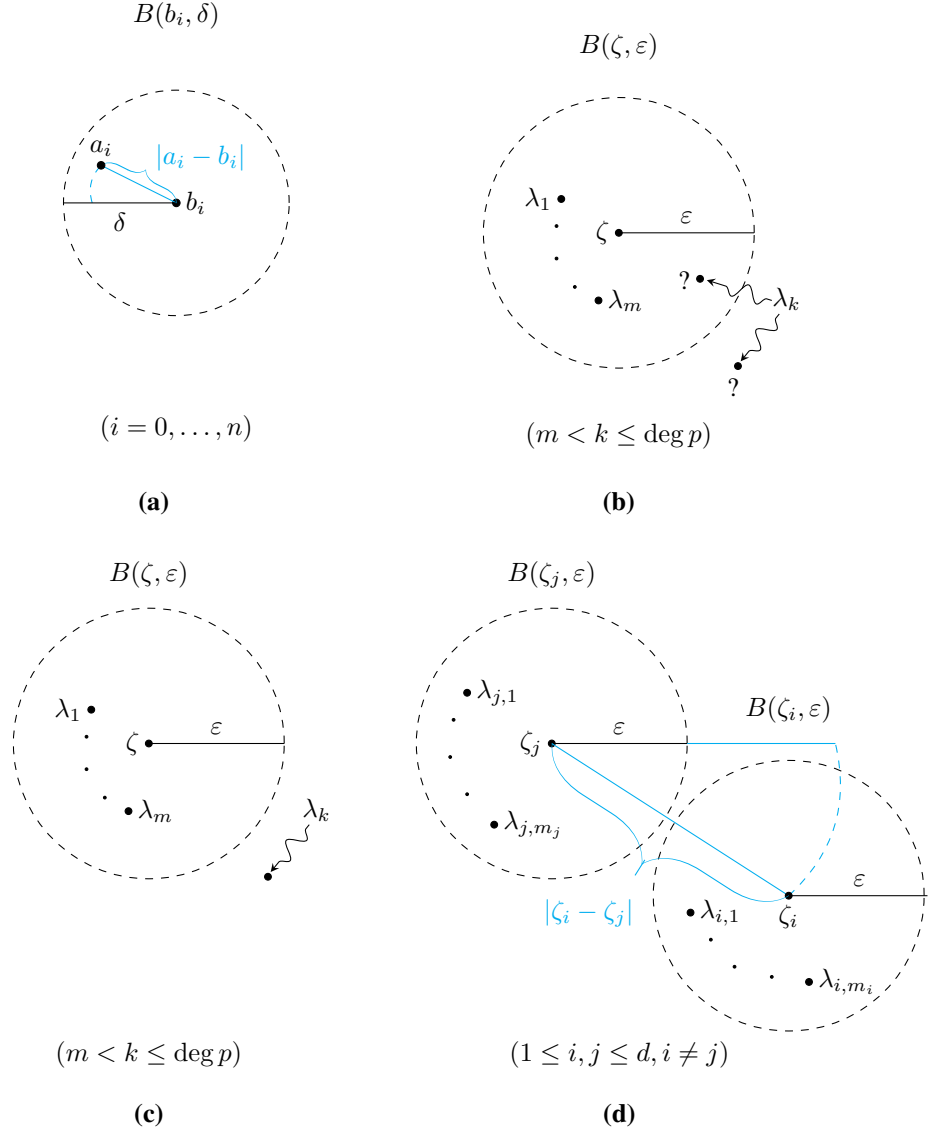

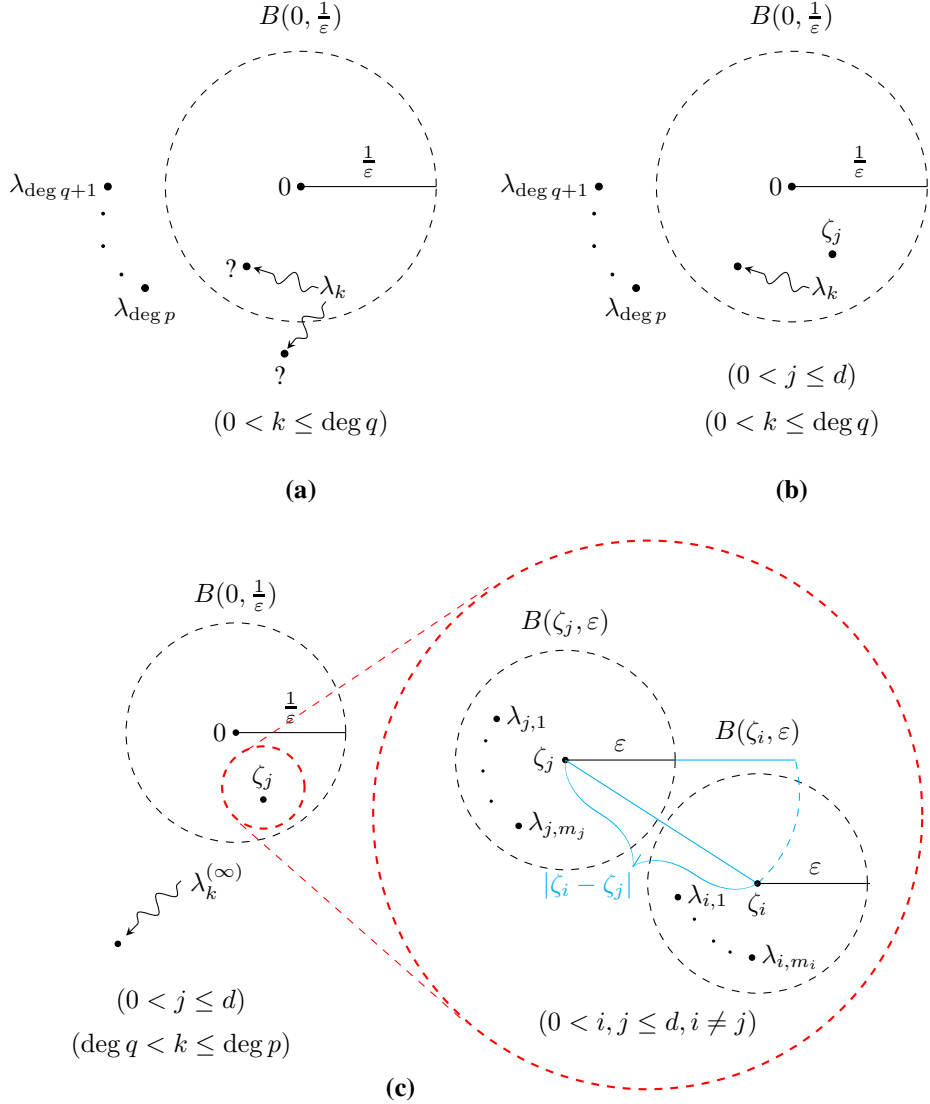
\begin{figure}[!]
\begin{tabular}{cc}
\hspace{-7.5cm}
        \begin{tikzpicture}[scale=0.9]
            \draw[dashed] (8,0.5)circle[radius=2];
            \draw (8,0.5) node[left] {$0$} -- (10,0.5);
            \draw [color=black, fill=black] (8,0.5)circle(0.05);
            \draw (9,0.5) node[above] {$\frac{1}{\varepsilon}$};
            \draw [color=black, fill=black] (5.7,-1)circle(0.05) node[below]{$\lambda_{\deg p}$};
            \draw [color=black, fill=black] (5.15,0.5)circle(0.05) node[left]{$\lambda_{\deg q +1}$};
            \draw [color=black, fill=black] (5.35,-0.8)circle(0.02);
            \draw [color=black, fill=black] (5.08,-0.38)circle(0.02);
            \draw [color=black, fill=black] (5.08,0.1)circle(0.02);
            \draw[swiggle] {(8.37 ,-1.2 ) ++ (0,0)} node[yshift=0.2cm, xshift=0.1cm] {$\lambda_{k}$} -- (7.8,-1.9) node[below, yshift=-0.1cm, xshift=-0.1cm] {?};
            \draw[swiggle] {(8.26 ,-1 ) ++ (0,0)} -- (7.3,-0.7) node[left, xshift=-0.1cm] {?};
            \draw [color=black, fill=black] (7.76,-1.97)circle(0.05);
            \draw [color=black, fill=black] (7.2,-0.68)circle(0.05);
            \draw (8,-3) node {$(0 < k \leq \deg q)$};
            \draw (8,3) node {$B(0,\frac{1}{\varepsilon})$};
            \draw (8,-4) node {\textbf{(a)}};
        \end{tikzpicture} \vspace{0.25cm}
& \hspace{-6.25cm}
     \begin{tikzpicture}[scale=0.9]
            \draw[dashed] (8,0.5)circle[radius=2];
            \draw (8,0.5) node[left] {$0$} -- (10,0.5);
            \draw [color=black, fill=black] (8,0.5)circle(0.05);
            \draw (9,0.5) node[above] {$\frac{1}{\varepsilon}$};
            \draw [color=black, fill=black] (5.7,-1)circle(0.05) node[below]{$\lambda_{\deg p }$};
            \draw [color=black, fill=black] (5.15,0.5)circle(0.05) node[left]{$\lambda_{\deg q+1}$};
            \draw [color=black, fill=black] (8.6,-0.5)circle(0.05) node[above]{$\zeta_{j}$};
            \draw [color=black, fill=black] (5.35,-0.8)circle(0.02);
            \draw [color=black, fill=black] (5.08,-0.38)circle(0.02);
            \draw [color=black, fill=black] (5.08,0.1)circle(0.02);
            \draw[swiggle] {(8.26 ,-1 ) node[right, xshift=-0.1cm] {$\lambda_{k}$} ++ (0,0)} -- (7.3,-0.7);
            \draw [color=black, fill=black] (7.2,-0.68)circle(0.05);
            \draw (8,-2.3) node {$(0 < j \leq d)$};
            \draw (8,-3) node {$(0 < k \leq \deg q)$};
            \draw (8,3) node {$B(0,\frac{1}{\varepsilon})$};
            \draw (8,-4) node {\textbf{(b)}};
        \end{tikzpicture}
  \\ \hspace{-0.3cm}
          \begin{tikzpicture}[scale=0.73]
            \draw[dashed] (2,0.5)circle[radius=2];
            \draw (4,0.5) -- (2,0.5)node[left] {$0$};
            \draw [color=black, fill=black] (2,0.5)circle(0.05);
            \draw (3,0.5) node[above] {$\frac{1}{\varepsilon}$};
            \draw[-stealth,decorate,decoration={snake,amplitude=3pt,pre length=2pt,post length=3pt}] (1,-2.2) node[right]{$\lambda_{k}^{(\infty)}$} -- ++(-1,-1);
            \draw [color=black, fill=black] (-0.15,-3.35)circle(0.05);
           \draw [color=black, fill=black] (2.5,-0.72)circle(0.05) node[above]{$\zeta_{j}$};
           \draw[dashed,red, thick] (2.5,-0.5)circle[radius=0.75];
            \draw (1,-4.4) node {$(0 < j \leq d)$};
            \draw (1,-5.2) node {$(\deg q < k \leq \deg p)$};
            \draw (2,3) node {$B(0,\frac{1}{\varepsilon})$};
            \draw (5,-6) node {\textbf{(c)}};
\draw[dashed,red, thick] (9.5,-1)circle[radius=5];
        \draw[dashed, red]  (2.08,-1.16) -- (6.2,-4.8);
        \draw[dashed, red]  (2.2,0.18) -- (6.6,3.15);
            \draw (8,2.5) node {$B(\zeta_j,\varepsilon)$};
            \draw[dashed] [color=black] (8,0) circle (2);
            \draw (8,0) node[left] {$\zeta_j$};
            \draw [color=black, fill=black] (8,0)circle(0.05);
            \draw (9,0) node[above] {$\varepsilon$};
            \draw (8,0) -- (10,0);
            \draw [color=black, fill=black] (7.15,-1.2)circle(0.05) node[right]{$\lambda_{j,m_j}$};
            \draw [color=black, fill=black] (6.75,0.75)circle(0.05) node[right]{$\lambda_{j,1}$};
            \draw [color=black, fill=black] (6.55,0.35)circle(0.02);
            \draw [color=black, fill=black] (6.5,-0.2)circle(0.02);
            \draw [color=black, fill=black] (6.65,-0.75)circle(0.02);
            \draw[dashed] (11.5,-2.25)circle[radius=2];
            \draw (11.5,0.5) node {$B(\zeta_i,\varepsilon)$};
            \draw (9.4,-2.3) node[left] {\textcolor{cyan}{$|\zeta_i-\zeta_j|$}};
            \draw (11.5,-2.25) node[below] {$\zeta_i$} -- (13.55,-2.25);
            \draw[cyan] (11.5,-2.25) -- (8,0);
            \path [cyan,bend right, dashed]   (11.5,-2.25) edge (12.19,0);
            \draw[cyan] (10,0) -- (12.19,0);
            \draw [color=black, fill=black] (11.5,-2.25)circle(0.05);
            \draw (12.55,-2.25) node[above] {$\varepsilon$};
            \draw [color=black, fill=black] (11.4,-3.6)circle(0.05) node[right]{$\lambda_{i,m_i}$};
            \draw [color=black, fill=black] (10.05,-2.5)circle(0.05) node[right]{$\lambda_{i,1}$};
            \draw [color=black, fill=black] (10.35,-2.9)circle(0.02);
            \draw [color=black, fill=black] (10.7,-3.3)circle(0.02);
            \draw [color=black, fill=black] (11,-3.5)circle(0.02);
            \draw [pen colour={cyan}, decorate, decoration = {calligraphic brace, amplitude=20pt}] (11.5,-2.25) -- (8,0);
            \draw[color=cyan] (9.375,-1.72) -- (9.23,-1.95);
            \draw (9.5,-4.8) node {$(0< i,j \leq d, i\not= j)$};
        \end{tikzpicture}
\end{tabular}
\caption{This figure illustrates the geometry of the statements for Proposition \ref{Prop1AtLeastZeroRoots}, Corollary \ref{Cor3HasExactlymRoots}, and Theorem \ref{ThmRootSepincludingInfty}. \textbf{(a)} The $\frac{1}{\varepsilon}$-ball $B(0,\frac{1}{\varepsilon})$, is centered at $0$, with $\varepsilon>0$ given. Proposition \ref{Prop1AtLeastZeroRoots} guarantees there exists $\delta>0$ such that if the coefficients of $p(z)$ and $q(z)$ are as in Fig.\ 1(a) then $B(0,\frac{1}{\varepsilon})$ excludes $\deg p-\deg q$ roots of $p(z)$, which we denote by $\lambda_{q+1},\ldots, \lambda_{\deg p}$ (if $\deg p>\deg q$), but could possibly exclude one or more of its remaining roots $\lambda_{k}$ for $0<k\leq \deg q$. \textbf{(b)} Corollary \ref{Cor3HasExactlymRoots} guarantees in the case $\deg q=0$ or in the case $\deg q>0$, after constraining $\varepsilon$ so that $0<\varepsilon <(\max_{1 \leq j \leq d}|\zeta_j|)^{-1}$ [or equivalently $\zeta_j\in B(0, \frac{1}{\varepsilon})$, for all $j$], and possibly with a smaller $\delta>0$ from Proposition \ref{Prop1AtLeastZeroRoots}, that there are exactly $\deg p-\deg q$ roots of $p(z)$ outside of $B(0,\frac{1}{\varepsilon})$, which we denote by $\lambda_k=\lambda_k^{(\infty)}$ for $\deg q<k\leq \deg p$, while the remaining roots $\lambda_{k}$ for $0<k\leq \deg q$ lie in the closure of this ball $\overline{B(0, \frac{1}{\varepsilon})}$ . \textbf{(c)} Theorem \ref{ThmRootSepincludingInfty} guarantees, after further constraining $\varepsilon$ [as in (\ref{BestConstrianedVarepsilon})], that there exists possibly a smaller $\delta>0$ such that the conclusions of both Corollary \ref{cor2} and Corollary \ref{Cor3HasExactlymRoots} are simultaneously true. The figure here illustrates this by essentially superimposing Fig.\ 1(d) onto Fig.\ 2(b) [i.e., the zooming out of the red dashed circle and its interior region about the root $\zeta_j$ of $q(z)$].}\label{Fig2}
\end{figure}

\begin{figure}[!t]
\begin{center}
\hspace{-0.5cm}
\begin{tikzpicture}[scale=1, transform shape,
blockone/.style ={rectangle, draw, text width=7.75em,align=center, minimum height=0.25em},
Cir/.style ={draw, circle, inner sep=2pt}
]
\draw (1.6,-0.75) node[text width=6cm] (q) {\begin{equation*}
    q(z)= b_{\deg q}\prod_{i>0}^{d}(z-\zeta_i)^{m_i} \qquad\text{\small($\zeta_i\not=\zeta_j$, if $i\not=j$)}
\end{equation*}};
\draw (4,-3.5) node[blockone, scale=0.8] (q) {\vspace{-0.4cm}\begin{align*}
   \infty:\lambda_k = \lambda_k^{(\infty)}
\end{align*}\vspace{-1cm}\begin{align*}
    \deg q < k \leq \deg p
\end{align*}};
\draw (1,-2.5) node[blockone, scale=0.8] (q) {\vspace{-0.4cm}\begin{align*}
   \zeta_i :\lambda_k = \lambda_{i, \ell}
\end{align*}\vspace{-1cm}\begin{align*}
   0< i \leq d
\end{align*}};
\draw (-2.8,-4.6) node[text width=6cm] (q) {\begin{equation*}
    p(z)= a_{\deg p}\prod_{k=1}^{\deg p}(z\;-\;\lambda_k)\;=\; a_{\deg p} \left[\prod_{i>0}^{d}\prod_{\ell =1}^{m_i}(z-\lambda_{i,\ell})\right] \prod_{k> \deg q}^{\deg p}(z\;-\;\lambda_{k}^{(\infty)})
\end{equation*}};
\draw (-2.35,-4.4) -- (-2.35,-3); 
\draw (-2.35,-3) -- (-2.28,-3); 
\draw[->, >=stealth] (1.7,-3.1) -- (1.7,-4.4); 
\draw[->, >=stealth] (-0.95,-2.5) -- (-0.25,-2.5); 
\draw (-1.5,-3) node[Cir] {\shortstack{\small root \\ \small grouping}};
\draw (1.7,-1.3) -- (1.7,-1.9); 
\draw[dashed, ->, >=stealth] (-0.95,-3.5) -- (2.75,-3.5);
\draw[dashed, ->, >=stealth] (4.55,-4.1) -- (4.55,-4.4);
\draw (0,-6.5) node[text width=6cm] (q) {\begin{equation*}
   B\left(0,\frac{1}{\varepsilon}\right) \ni \underset{i \not = j}{\zeta_j} \not \in B\left(\zeta_i , \varepsilon\right) \ni \lambda_{i, \ell} \not \in \mathbb{C} \setminus \overline{ B\left(0, \frac{1}{\varepsilon}\right)} \ni \lambda_k^{(\infty)}
\end{equation*}};
\draw[dashed, ->, >=stealth] (5.1,-5.6) -- (5.1,-6.3); 
\draw[->, >=stealth] (1.7,-5.1) -- (1.7,-6.3); 
\draw[dashed] (4.55,-5.1) |- (5.1,-5.6); 
\draw[->, >=stealth] (1.7,-1.69) -- (1.7,-1.7); 
\draw[->, >=stealth] (-2.35,-4.01) -- (-2.35,-4); 
\end{tikzpicture}
\end{center}
\caption{This figure illustrates Remark \ref{RemRootGroupings} on Theorem \ref{ThmRootSepincludingInfty} [cf.\ Figure \ref{Fig2}.(c)] on the separation of all the roots, $\lambda_k,$ $1\leq k\leq \deg p$, of the polynomial $p$ (counting multiplicities) into the group $\lambda_{i, \ell},$ $1\leq \ell\leq m_i$ corresponding to the distinct roots $\zeta_i$ of $q$ (if any) of multiplicity $m_i$ for $0< i\leq d,$ and into the group $\lambda_k^{(\infty)}$, $\deg q<k\leq \deg p$ corresponding to those roots of $p$ in $\mathbb{C}\setminus \overline{B\left(0,\frac{1}{\varepsilon}\right)}=\left\{z\in \mathbb{C}:|z|>\frac{1}{\varepsilon}\right\}$ (an open neighborhood of $\infty$).}\label{FigRemarkConclusion}
\end{figure}

\begin{corollary} \label{CorPHasExactlymRoots}
Suppose $\zeta$ is a root of $q(z)$ of multiplicity $m$ and
$\varepsilon>0$, if $m=\deg q$, or
\begin{align}
    0<\varepsilon <\min_{1 \leq j \leq d, \zeta_j \not = \zeta}|\zeta_j - \zeta|,
\end{align} 
if $m\not=\deg q$, where $\zeta_j$, for $j=1,\ldots, d$, denotes all the distinct roots of $q$. Then there exists a $\delta>0$ such that if $\max_{0\leq i\leq n}|a_i-b_i|<\delta$, then $p$ has exactly $m$ roots whose distance is less than $\varepsilon$ from $\zeta$.
\end{corollary}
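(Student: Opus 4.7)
My plan is to obtain the sharper ``exactly $m$'' conclusion by combining the ``at least $m$'' bound of Theorem \ref{Thm1} with a matching ``at most $m$'' bound. Theorem \ref{Thm1} already furnishes some $\delta_1>0$ such that $\max_i|a_i-b_i|<\delta_1$ forces $p$ to have at least $m$ roots in $B(\zeta,\varepsilon)$, so it suffices to produce $\delta_2>0$ forcing $p$ to have at most $m$ such roots; then $\delta=\min(\delta_1,\delta_2)$ finishes the proof.

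For the ``at most $m$'' direction I would argue by contradiction, recycling the machinery from the proof of Theorem \ref{Thm1}. If no such $\delta_2$ existed, there would be a sequence of polynomials $p_k$ with $a_i^{(k)}\to b_i$ but at least $m+1$ roots (counting multiplicity) in $B(\zeta,\varepsilon)$. Exactly as in the proof of Theorem \ref{Thm1}, the convergence $a_{\deg q}^{(k)}\to b_{\deg q}\neq 0$ together with a pigeonhole/subsequence step stabilizes the degree to some $N\in\{\deg q,\ldots,n\}$, and statement 3 of Section \ref{Qualitative Section} extracts a further subsequence along which the $N$-tuple of roots converges to a point $(\mu_1,\ldots,\mu_N)\in(\mathbb{C}\cup\{\infty\})^N$. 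Passing to yet another subsequence, I can arrange that a fixed set of $m+1$ indices $i$ satisfies $\lambda_i^{(k_j)}\in B(\zeta,\varepsilon)$ for every $j$, so the corresponding limits $\mu_i$ are finite and lie in $\overline{B(\zeta,\varepsilon)}$.

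The crucial new step --- and the only one that actually invokes the hypothesis on $\varepsilon$ --- is to identify each such $\mu_i$ with $\zeta$. For finite $\mu_i$, taking $j\to\infty$ in $0=p_{k_j}(\lambda_i^{(k_j)})=\sum_\ell a_\ell^{(k_j)}(\lambda_i^{(k_j)})^\ell$ and using $a_\ell^{(k_j)}\to b_\ell$ yields $q(\mu_i)=0$; but the constraint $\varepsilon<\min_{\zeta_j\neq\zeta}|\zeta_j-\zeta|$ (and, when $m=\deg q$, the fact that $\zeta$ is the only root of $q$ at all) makes $\zeta$ the sole root of $q$ in $\overline{B(\zeta,\varepsilon)}$, forcing $\mu_i=\zeta$. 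Thus $\#\{i:\mu_i=\zeta\}\geq m+1$, and plugging this into the same identity
\begin{align*}
1=\lim_{z\to\zeta}\lim_{j\to\infty}\frac{(z-\zeta)p_{k_j}'(z)/p_{k_j}(z)}{(z-\zeta)q'(z)/q(z)}=\frac{\#\{i:\mu_i=\zeta\}}{m}
\end{align*}
used in the proof of Theorem \ref{Thm1} forces $\#\{i:\mu_i=\zeta\}=m$, contradicting $\geq m+1$. The main obstacle I foresee is the careful justification that limits of roots of $p_{k_j}$ must be roots of $q$ --- an exchange of limit relying on coefficientwise convergence together with uniform boundedness of the $a_\ell^{(k_j)}$ --- but otherwise the argument simply mirrors, with $m+1$ in place of $m-1$, the contradiction derived in Theorem \ref{Thm1}.
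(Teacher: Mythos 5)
Your proof is correct, and it differs from the paper's in one genuinely substantive way: the mechanism used to identify limits of roots as roots of $q$. Both you and the paper split the job into the ``at least $m$'' half (Theorem \ref{Thm1}) and an ``at most $m$'' half proved by contradiction, both extract a subsequence along which the degree stabilizes and the root $N$-tuples converge in $(\mathbb{C}\cup\{\infty\})^N$, and both invoke the same logarithmic-derivative identity $\#\{i:\mu_i=\zeta\}=m$. But where you then apply the direct substitution $0=p_{k_j}(\lambda_i^{(k_j)})=\sum_\ell a_\ell^{(k_j)}(\lambda_i^{(k_j)})^\ell\to q(\mu_i)$ to show every one of the $m+1$ finite limits $\mu_i\in\overline{B(\zeta,\varepsilon)}$ is a root of $q$, hence equal to $\zeta$, hence $\#\{i:\mu_i=\zeta\}\geq m+1$, the paper instead reorders so that a single exceptional limit $\mu_{m+1}\neq\zeta$ falls in $\overline{B(\zeta,\varepsilon)}$ and establishes $q(\mu_{m+1})=0$ via a second residue computation $\lim_{z\to\mu_{m+1}}(z-\mu_{m+1})q'(z)/q(z)=\lim_{z\to\mu_{m+1}}\lim_j(z-\mu_{m+1})p_{k_j}'(z)/p_{k_j}(z)\geq 1$; the contradiction is then against $\mu_{m+1}\neq\zeta$ rather than against the count. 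Your substitution argument is arguably more elementary and avoids a second residue calculation. One small note: the ``obstacle'' you flag at the end is not really one --- for a finite limit $\mu_i$, the sum has $n+1$ terms, each term $a_\ell^{(k_j)}(\lambda_i^{(k_j)})^\ell$ converges by continuity of multiplication, and no uniform boundedness beyond the convergence itself is required.
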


\begin{proof}
Let $\zeta_1, \ldots, \zeta_d$ be all the distinct roots of $q$, $m_i$ be the multiplicity of the root $\zeta_i$ of $q$ for each $i=1,\ldots,d$, and $\zeta$ be any root of $q$. In particular, $\zeta=\zeta_{\ell}$ and $m=m_{\ell}$ for some $\ell$ with $1 \leq \ell \leq d$. Choose any $\varepsilon>0$ and, if $m\not=\deg q$, with $\varepsilon < \min_{1 \leq i \leq d, i \not = \ell} |\zeta_i - \zeta|$. Now consider the structure of the proof of Theorem \ref{Thm1}. Proceeding in a similar contradictory fashion as that proof with the only exception being that the $p_{k_j}$ will now have at least $m+1$ roots (hence $N=\deg p_k\geq m+1$) whose distance from $\zeta$ is less than $\varepsilon$ and again all its roots (counting multiplicity) are $\lambda_i^{(k_j)}$ which converge to $\mu_i$ (in $\mathbb{C}\cup \{\infty\}$) for $i=1,\ldots, N$. Again, it follows by this along with (\ref{SumDegreeFactoriization}) and (\ref{convervingfrompp'toqq'}) that
\begin{align}
    1=\lim_{z\to \zeta}\lim_{j\to \infty} \frac{(z-\zeta)\frac{p_{k_j}'(z)}{p_{k_j}(z)}}{(z-\zeta) \frac{q'(z)}{q(z)}}=\frac{\# \text{ of } i\text{'s such that }\mu_i=\zeta}{m}.
\end{align}
Thus, without loss of generality, we may assume the roots $\lambda_i^{(k_j)}$ are ordered such that
\begin{align}
    |\lambda_i^{(k_j)}-\zeta|<\varepsilon
\end{align}
for $i=1,\ldots, m+1$, $\lim_{j\to \infty}\lambda_i^{(k_j)}=\mu_i=\zeta$ for $i=1,\ldots, m$, $\lim_{j\to \infty}\lambda_{m+1}^{(k_j)}=\mu_{m+1}\in\mathbb{C}$, and $\mu_{m+1}\not =\zeta$. But we also have
\begin{align}
    &\lim_{z\to \mu_{m+1}}(z-\mu_{m+1}) \frac{q'(z)}{q(z)}=\left\{\begin{array}{ll}
    0, \text{ if }q(\mu_{m+1})\neq 0,\\
    m_j, \text{ if }\mu_{m+1}=\zeta_j\text{ for some }j.
    \end{array}
    \right.\\
    &=\lim_{z\to \mu_{m+1}}\lim_{j\to \infty} (z-\mu_{m+1})\frac{p_{k_j}'(z)}{p_{k_j}(z)}
    =(\# \text{ of } i\text{'s such that }\mu_i=\mu_{m+1})\geq 1,
\end{align}
implying that $\mu_{m+1}$ is a root of $q(z)$ with 
\begin{align}
   \min_{1 \leq i \leq d, i \not = \ell} |\zeta_i - \zeta|> \varepsilon \geq \lim_{j\to \infty}|\lambda_{m+1}^{(k_j)}-\zeta| = |\mu_{m+1}-\zeta_{\ell}| 
\end{align}
from which we conclude that we must have
\begin{align}
    \mu_{m+1}=\zeta,
\end{align}
a contradiction that $\mu_{m+1}\not = \zeta$. This proves the corollary.
\end{proof}

\begin{corollary}[Root separation without $\boldsymbol{\infty}$] \label{cor2}
Suppose $\zeta_1, \ldots, \zeta_d$ are all the distinct roots of $q(z)$ with the root $\zeta_j$ having multiplicity $m_j$, respectively, for $j=1,\ldots, d$. If $\varepsilon>0$ or, in the case $d>1$, \begin{align*} 0 < \varepsilon < \min_{1\leq i,j\leq d, i\not=j}|\zeta_i - \zeta_j|,\end{align*} then there exists a $\delta>0$ such that if $\max_{0\leq i\leq n}|a_i-b_i|<\delta$, then $p$ has exactly $m_j$ roots whose distance is less than $\varepsilon$ from $\zeta_j$ for each $j=1,\ldots,d$.
\end{corollary}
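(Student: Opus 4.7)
The plan is to reduce Corollary \ref{cor2} directly to Corollary \ref{CorPHasExactlymRoots} by applying the latter separately to each distinct root $\zeta_j$ of $q$ and then intersecting the resulting $\delta$'s. The central observation is that, under the hypothesis on $\varepsilon$, the constraint on $\varepsilon$ required by Corollary \ref{CorPHasExactlymRoots} is automatically satisfied at every $\zeta_j$.

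First I would split the argument into the two cases built into the hypothesis. If $d=1$, then $q$ has the single distinct root $\zeta_1$ with multiplicity $m_1=\deg q$, so for $\zeta=\zeta_1$ the condition in Corollary \ref{CorPHasExactlymRoots} is just $\varepsilon>0$, which holds. If instead $d>1$, then for each fixed $j$ the root $\zeta_j$ has multiplicity $m_j<\deg q$ (since there is at least one other distinct root), so the condition in Corollary \ref{CorPHasExactlymRoots} for $\zeta=\zeta_j$ is
\begin{align*}
  0<\varepsilon<\min_{1\leq i\leq d,\ i\not=j}|\zeta_i-\zeta_j|,
\end{align*}
and this holds because our hypothesis gives the (possibly stronger) bound $\varepsilon<\min_{1\leq i,k\leq d,\ i\not=k}|\zeta_i-\zeta_k|$, which is no larger than $\min_{i\not=j}|\zeta_i-\zeta_j|$ for each fixed $j$.

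Next, for each $j\in\{1,\dots,d\}$ I would invoke Corollary \ref{CorPHasExactlymRoots} with the root $\zeta_j$ of multiplicity $m_j$ and the given $\varepsilon$ to obtain a $\delta_j>0$ such that whenever $\max_{0\le i\le n}|a_i-b_i|<\delta_j$, the polynomial $p$ has exactly $m_j$ roots within distance $\varepsilon$ of $\zeta_j$. Setting
\begin{align*}
  \delta=\min_{1\leq j\leq d}\delta_j>0
\end{align*}
(which is positive since $d$ is finite), the implication holds simultaneously for every $j$ whenever $\max_{0\leq i\leq n}|a_i-b_i|<\delta$, giving the conclusion.

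There is really no serious obstacle here; the only thing to check carefully is the case distinction $d=1$ versus $d>1$ when verifying the hypothesis of Corollary \ref{CorPHasExactlymRoots}. Note that the corollary's claim is purely about each ball $B(\zeta_j,\varepsilon)$ separately, so no disjointness of these balls is needed in the proof even though $\varepsilon$ is only constrained by the full pairwise distance $\min_{i\neq k}|\zeta_i-\zeta_k|$ rather than half of it (as depicted in Figure \ref{Fig1}(d)).
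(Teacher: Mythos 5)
Your proposal is correct and takes essentially the same route as the paper: apply Corollary~\ref{CorPHasExactlymRoots} to each $\zeta_j$ separately (after noting that the hypothesis on $\varepsilon$ implies the per-root hypothesis $\varepsilon<\min_{i\neq j}|\zeta_i-\zeta_j|$ when $d>1$, and that $d=1$ forces $m_1=\deg q$ so only $\varepsilon>0$ is needed), then take $\delta=\min_j\delta_j$. The only difference is that you spell out explicitly why each $\zeta_j$ lands in the correct case of Corollary~\ref{CorPHasExactlymRoots}, which the paper leaves implicit.
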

\begin{proof}
If $d=1$ then the statement is true by the previous corollary. Hence, suppose $d>1$. First, we have
\begin{align}
    \min_{1\leq i,j\leq d, i\not=j}|\zeta_i - \zeta_j|=\min_{1\leq j\leq d} \min_{1\leq i\leq d, i\not=j}|\zeta_i - \zeta_j|.
\end{align}
Now choose an $\varepsilon>0$ such that $\varepsilon<\min_{1\leq i,j\leq d, i\not=j}|\zeta_i - \zeta_j|$. Then, for each $j=1,\ldots, d$, it follows that 
\begin{align}
    0<\varepsilon <\min_{1\leq i\leq d, i\not=j}|\zeta_i - \zeta_j|
\end{align}
and thus by the previous corollary, there exists a $\delta_j>0$ such that if $\max_{0\leq i\leq n}|a_i-b_i|<\delta_j$, then $p$ has exactly $m_j$ roots whose distance is less than $\varepsilon$ from $\zeta_j$. Thus, if we take
\begin{align}
    \delta = \min_{1\leq j\leq d}\delta_j
\end{align}
then $0<\delta\leq \delta_j$ for each $j=1,\ldots, d$ and it follows that if $\max_{0\leq i\leq n}|a_i-b_i|<\delta$, then $p$ has exactly $m_j$ roots whose distance is less than $\varepsilon$ from $\zeta_j$ for all $j=1,\ldots, d$. This proves the corollary.
\end{proof}

At this point, the reader may find it helpful to look at Figure \ref{Fig1} which illustrates the statements in Theorem \ref{Thm1}, Corollary \ref{CorPHasExactlymRoots}, and Corollary \ref{cor2}.

Next, we will need the following definition and lemma (proof is left to the reader) in the next proposition.
\begin{definition}[Reciprocal polynomial]\label{DefReciprocalPoly}
For nonzero polynomials $p(z), q(z)$ with the form (\ref{Def_Poly_p}), (\ref{Def_Poly_q}), their reciprocal polynomials (or reflected polynomials) $\Tilde{p}(z), \Tilde{q}(z)$ are defined as 
\begin{align}
     \Tilde{p}(z)&=z^{\deg p}p\left (\frac{1}{z}\right )= a_0z^{\deg p} + a_{1}z^{\deg p-1} + \cdots + a_{\deg p-1}z+a_{\deg p} \label{Def_RPoly_p},\\
     \Tilde{q}(z)&=z^{\deg q}q\left (\frac{1}{z}\right )= b_0z^{\deg q} + b_{1}z^{\deg q-1} + \cdots + b_{\deg q-1}z+b_{\deg q} \label{Def_RPoly_q}.
\end{align}
\end{definition}
\begin{lemma}[Basic properties of reciprocal polynomials]\label{LemReciprocalPolyRootMultiplicities}
Let $p(z)$ be a nonzero polynomial of the form (\ref{Def_Poly_p}). Then 
\begin{align}
    \deg \Tilde{p}=\deg p-\min \{i:a_i\not = 0\}\leq \max \{i:a_i\not = 0\}=\deg p,
\end{align}
 \begin{align}
     \Tilde{p}(z)=a_{\deg \Tilde{p}}z^{\deg \Tilde{p}} + a_{\deg \Tilde{p}+1}z^{\deg \Tilde{p}-1} + \cdots + a_{\deg p},\;\; a_{\deg \Tilde{p}}\not = 0,
 \end{align}
 and
 \begin{align}
     \Tilde{p}(0)=a_{\deg p}\not =0.
 \end{align}
Furthermore, $\lambda$ is a nonzero root of $p(z)$ if and only if $\frac{1}{\lambda}$ is a root of $\Tilde{p}(z)$, in which case, the roots have the same multiplicities. Moreover, $p(z)$ and $\Tilde{p}(z)$ have the factorizations 
\begin{align}
    p(z)=a_{\deg p}\prod_{i=1}^{\deg p}(z-\lambda_i),\;\;\Tilde{p}(z)=a_{\deg \Tilde{p}}\prod_{i=1}^{\deg \Tilde{p}}(z-\Tilde{\lambda}_i),
\end{align}
where
\begin{align}
    \Tilde{\lambda}_i&=\frac{1}{\lambda_{i}},\;\;0<i\leq \deg {\Tilde{p}},\\
    \lambda_i&=0,\;\;\deg {\Tilde{p}}<i\leq \deg p.
\end{align}
\end{lemma}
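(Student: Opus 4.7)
The plan is to reduce everything to a direct computation after factoring out the largest pure power of $z$ dividing $p(z)$. Let $d = \deg p$ and $m = \min\{i : a_i \neq 0\}$, so $0 \leq m \leq d$ and $a_d, a_m \neq 0$. I would first observe that $p(z) = z^m \hat{p}(z)$, where $\hat{p}(z) = a_m + a_{m+1} z + \cdots + a_d z^{d-m}$ has $\deg \hat{p} = d - m$ and $\hat{p}(0) = a_m \neq 0$. This decomposition isolates the zero roots of $p$ (of which there are exactly $m$, counting multiplicity) from its nonzero roots, which coincide with the roots of $\hat{p}$.

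Substituting into the definition of $\tilde{p}$ gives the one-line computation $\tilde{p}(z) = z^d p(1/z) = z^d \cdot (1/z)^m \hat{p}(1/z) = z^{d-m} \hat{p}(1/z) = a_m z^{d-m} + a_{m+1} z^{d-m-1} + \cdots + a_d$. From this, all of the structural claims fall out at once: $\deg \tilde{p} = d - m = \deg p - \min\{i : a_i \neq 0\} \leq \deg p$, the leading coefficient is $a_m \neq 0$, and $\tilde{p}(0) = a_d = a_{\deg p} \neq 0$.

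For the root correspondence with multiplicities, suppose $\lambda \neq 0$ is a root of $p$ of multiplicity $k$. Then $\lambda$ is a root of $\hat{p}$ of the same multiplicity, so $\hat{p}(z) = (z - \lambda)^k g(z)$ with $g(\lambda) \neq 0$. Using the identity $1/z - \lambda = -(\lambda/z)(z - 1/\lambda)$, I would substitute into $\tilde{p}(z) = z^{d-m} \hat{p}(1/z)$ and collect factors to obtain $\tilde{p}(z) = (-\lambda)^k (z - 1/\lambda)^k \cdot z^{d-m-k} g(1/z)$, where $d - m - k \geq 0$ since $k \leq \deg \hat{p} = d - m$. The residual factor is nonzero at $z = 1/\lambda$ because $\lambda \neq 0$ and $g(\lambda) \neq 0$, so $1/\lambda$ is a root of $\tilde{p}$ of multiplicity exactly $k$. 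The converse direction follows by the same computation applied to $\tilde{p}$, using that $\tilde{\tilde{p}}(z) = z^{d-m} \tilde{p}(1/z) = \hat{p}(z)$, which has the same nonzero roots as $p$ with the same multiplicities.

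The factorizations of $p$ and $\tilde{p}$ then follow by applying the basic statement~(\ref{ProdDegreeFactoriization}) of the preliminaries to each polynomial and using the bijection $\lambda \leftrightarrow 1/\lambda$ just established on the nonzero roots, with the remaining $m$ roots of $p$ indexed as $\lambda_i = 0$ for $\deg \tilde{p} < i \leq \deg p$. The step requiring the most care is the multiplicity bookkeeping in the third paragraph---verifying the nonnegative exponent $d - m - k \geq 0$ and the nonvanishing of the residual factor at $z = 1/\lambda$---but this is routine once the factorization $p(z) = z^m \hat{p}(z)$ has been introduced to decouple the zero roots from the nonzero ones.
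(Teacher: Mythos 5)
The paper explicitly states that the proof of this lemma is ``left to the reader,'' so there is no paper proof to compare against; you are supplying the missing exercise, and your argument is correct. The factorization $p(z) = z^{m}\hat{p}(z)$ with $m = \min\{i : a_i \neq 0\}$ cleanly isolates the zero roots of $p$, makes the identity $\tilde{p}(z) = z^{d-m}\hat{p}(1/z)$ immediate, and your multiplicity bookkeeping---checking $d-m-k \geq 0$ and that $z^{d-m-k}g(1/z)$ is a polynomial not vanishing at $z=1/\lambda$---is exactly what is needed; the observation $\tilde{\tilde{p}} = \hat{p}$ then delivers the converse painlessly, and the final factorizations follow from (\ref{ProdDegreeFactoriization}) once the roots are ordered so the zero roots of $p$ come last.

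One small remark worth making explicit: your computation gives the leading coefficient of $\tilde{p}$ as $a_m = a_{\deg p - \deg\tilde{p}}$, whereas the lemma as printed writes it as $a_{\deg\tilde{p}}$. These agree only when $\deg p = 2\deg\tilde{p}$, so the subscript in the lemma is a typographical slip: the coefficient indices in that display should run from $\deg p - \deg\tilde{p}$ up to $\deg p$ (equivalently from $\min\{i:a_i\neq 0\}$ to $\deg p$), not from $\deg\tilde{p}$. Your version is the correct one, and since the downstream uses of the lemma in Proposition \ref{Prop1AtLeastZeroRoots} and Theorem \ref{ThmQuantOstrowskiLike} only invoke the root-reciprocity and $\tilde{p}(0)=a_{\deg p}\neq 0$, never that particular subscript, the slip is harmless to the rest of the paper.
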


\begin{proposition} \label{Prop1AtLeastZeroRoots}
 Assume $q(z)$ is a nonzero polynomial. If $\varepsilon >0$ then there exists a $\delta >0$ such that if $\max_{0\leq i\leq n}|a_i-b_i|<\delta$, then $p(z)$ has at least $\deg p-\deg q$ roots whose distance from $0$ is greater than $\frac{1}{\varepsilon}$.
\end{proposition}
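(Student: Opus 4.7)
The plan is to reduce the claim to Theorem~\ref{Thm1} applied at $\zeta = 0$ via the ``$n$-reversed'' polynomials
\[
P(z) = z^{n} p(1/z) = a_0 z^n + a_1 z^{n-1} + \cdots + a_n, \qquad Q(z) = z^{n} q(1/z) = b_0 z^n + b_1 z^{n-1} + \cdots + b_n,
\]
both viewed as elements of $P_n(\mathbb{C})$. Their standard-form coefficients are just the reversals of those of $p$ and $q$, so the hypothesis $\max_{0\le i\le n}|a_i-b_i|<\delta$ is exactly the coefficient-proximity hypothesis of Theorem~\ref{Thm1} applied to the pair $(P,Q)$ (symmetric under reindexing).

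If $\deg q = n$ the conclusion is vacuous (since $\deg p \le n$ always gives $\deg p - \deg q \le 0$), so assume $\deg q < n$. Since $b_{\deg q+1} = \cdots = b_n = 0$, factoring out the vanishing low-order coefficients of $Q$ yields $Q(z) = z^{n-\deg q}\,\tilde{q}(z)$ with $\tilde{q}(0) = b_{\deg q}\neq 0$ by Lemma~\ref{LemReciprocalPolyRootMultiplicities}, so $\zeta = 0$ is a root of $Q$ of multiplicity exactly $n-\deg q \ge 1$. Theorem~\ref{Thm1} then produces a $\delta>0$ such that whenever $\max_i|a_i-b_i|<\delta$, the polynomial $P$ has at least $n-\deg q$ roots (counted with multiplicity) inside $B(0,\varepsilon)$.

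Shrinking $\delta$ further to ensure $a_{\deg q}\neq 0$ (hence $\deg p \ge \deg q$), the same factorization yields $P(z) = z^{n-\deg p}\,\tilde{p}(z)$ with $\tilde{p}(0) = a_{\deg p}\neq 0$, so $z=0$ is a root of $P$ of multiplicity exactly $n-\deg p$. Subtracting this from the count above, at least $(n-\deg q)-(n-\deg p) = \deg p - \deg q$ of the roots of $P$ in $B(0,\varepsilon)$ are nonzero (with multiplicity). By the multiplicity-preserving reciprocal correspondence in Lemma~\ref{LemReciprocalPolyRootMultiplicities}, each such nonzero root $\mu$ of $P$ corresponds to a root $\lambda = 1/\mu$ of $p$ of the same multiplicity with $|\lambda| > 1/\varepsilon$, which completes the argument. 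The only real obstacle is the multiplicity bookkeeping at $z=0$ for both $P$ and $Q$ coming from their ``missing'' high-order coefficients; once this is organized cleanly, everything else is an immediate application of the already-proved Theorem~\ref{Thm1}.
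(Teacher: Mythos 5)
Your proof is correct and takes essentially the same route as the paper: pass to reciprocal polynomials and apply Theorem~\ref{Thm1} at $\zeta=0$. The one real difference is the degree bookkeeping. The paper applies Theorem~\ref{Thm1} to $\tilde p(z)$ and $z^{\deg p - \deg q}\tilde q(z)$, both of degree $\deg p$; since $\tilde p(0)=a_{\deg p}\neq 0$, the $\deg p-\deg q$ roots it produces near $0$ are automatically nonzero, so there is no multiplicity at $0$ to subtract. The downside is that the comparison polynomial $z^{\deg p - \deg q}\tilde q(z)$ depends on $\deg p$, so the $\delta$ from Theorem~\ref{Thm1} in principle varies with the perturbation, and one should minimize over the finitely many possible values of $\deg p$ (the paper leaves this implicit). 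You instead fix the reversal at degree $n$, taking $P(z)=z^n p(1/z)$ and $Q(z)=z^n q(1/z)$, so that $Q$ is independent of $p$ and the coefficient match with Theorem~\ref{Thm1} is exact and uniform; the cost is that $P$ carries an extra root of multiplicity $n-\deg p$ at $0$, which you must subtract from the count of roots in $B(0,\varepsilon)$. Both are valid, and your version buys cleaner uniformity in $\delta$ at the price of a little extra multiplicity accounting. One small nit: Lemma~\ref{LemReciprocalPolyRootMultiplicities} relates $p$ to $\tilde p$, while you pass from $P=z^{n-\deg p}\tilde p$ back to $p$; this is fine since $P$ and $\tilde p$ share the same nonzero roots with the same multiplicities, but that intermediate step is worth stating explicitly.
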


\begin{proof} Assume that $q\not =0$, i.e., $q(z)$ is not the zero polynomial. Then we know that if $p$ is a polynomial such that $|a_{\deg q}-b_{\deg q}|<|b_{\deg q}|$ then $a_{\deg q}\not =0$ implying $\deg p\geq \deg q$. Thus, without loss of generality we can consider such polynomial $p$. Now consider the polynomial
\begin{align}
    z^{\deg p - \deg q}\Tilde{q}(z)=b_0z^{\deg p} + b_{1}z^{\deg p-1} + \cdots +b_{\deg q}z^{\deg p-\deg q}.\label{DefModifiedReciprocalPolyForq}
\end{align}
It has $\zeta = 0$ as a root of multiplicity $\deg p-\deg q\geq 0$. Let $\varepsilon >0$ be given. By Theorem \ref{Thm1}, there exists a $\delta >0$ such that if $\max_{0\leq i\leq n}|a_i-b_i|<\delta$ then $\Tilde{p}(z)$ has at least $\deg p-\deg q$ roots whose distance from $0$ is less than $\varepsilon$, that is, if we denote these roots by $\Tilde{\lambda}_i$ for $0<i\leq \deg p-\deg q$ then 
\begin{align}
\prod_{0<i\leq \deg p-\deg q}(z-\Tilde{\lambda}_i)\;|\;\Tilde{p}(z)\label{RefTildeLinearFactorsDivideReciprocalPoly}
\end{align} 
and $|\Tilde{\lambda}_i|<\varepsilon$ for $0<i\leq \deg p-\deg q$. Now since $\Tilde{p}(0)=a_{\deg p}\not = 0$ then $\Tilde{\lambda}_i\not =0$ for $0<i\leq \deg p-\deg q$ which implies that $\lambda_i=\frac{1}{\Tilde{\lambda}_i}$ is a root of $p(z)$ with $\varepsilon^{-1}<|\lambda_i|$ for $0<i\leq \deg p-\deg q$ and since (\ref{RefTildeLinearFactorsDivideReciprocalPoly}), it follows by Lemma \ref{LemReciprocalPolyRootMultiplicities} that $\prod_{0<i\leq \deg p-\deg q}(z-\lambda_i)\;|\;p(z)$. This proves the proposition.
\end{proof}

\begin{corollary}\label{Cor3HasExactlymRoots} Assume $q(z)$ is a nonzero polynomial and, if $\deg q\geq 1$, let $\zeta_1,\ldots, \zeta_d$ denote all its distinct roots. Let $\varepsilon$ be given such that \begin{align}\label{BestConstrianedVarepsilon} 
0<\varepsilon <\left\{ 
\begin{array}{cc}
\infty  & \text{if }\deg q=0 \text{ or if } d=1 \text{ and }\zeta_1=0,\\
\left(\max_{1 \leq j \leq d}|\zeta_j|\right)^{-1} & \text{otherwise.}
\end{array}
\right.
\end{align} 
Then there exists a $\delta>0$ such that if $\max_{0\leq i\leq n}|a_i-b_i|<\delta$, then $p$ has exactly $\deg p - \deg q$ roots whose distance from $0$ is greater than $\frac{1}{\varepsilon}$.
\end{corollary}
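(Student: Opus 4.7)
The plan is to reduce this to Corollary \ref{CorPHasExactlymRoots} applied on the reciprocal side, exactly as Proposition \ref{Prop1AtLeastZeroRoots} was reduced to Theorem \ref{Thm1}, but now extracting an exact count rather than a mere lower bound. By Lemma \ref{LemReciprocalPolyRootMultiplicities}, the nonzero roots $\lambda$ of $p$ with $|\lambda|>1/\varepsilon$ are in bijection, via $\lambda\mapsto 1/\lambda$, with the roots of $\tilde p$ lying in the open disk $B(0,\varepsilon)$, so counting one is the same as counting the other.

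First I would choose $\delta\le |b_{\deg q}|$, which forces $a_{\deg q}\neq 0$ and hence $\deg p\ge \deg q$. Since $\deg p$ then takes only finitely many values $N\in\{\deg q,\deg q+1,\dots,n\}$, I would handle each such $N$ separately. For a fixed $N$, introduce the auxiliary polynomial
\begin{align*}
Q_N(z)\;=\;z^{N-\deg q}\,\tilde q(z)\;=\;b_0z^N+b_1z^{N-1}+\cdots+b_{\deg q}\,z^{N-\deg q},
\end{align*}
which by Lemma \ref{LemReciprocalPolyRootMultiplicities} (using $\tilde q(0)=b_{\deg q}\neq 0$) has $0$ as a root of multiplicity exactly $N-\deg q$, the remaining distinct roots of $Q_N$ being $1/\zeta_j$ for each nonzero $\zeta_j$.

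Next I would apply Corollary \ref{CorPHasExactlymRoots} to $Q_N$, viewed in $P_N(\mathbb{C})$, at the root $\zeta=0$ of multiplicity $m=N-\deg q$. The hypothesis to verify is that either $m=\deg Q_N$ or $\varepsilon$ is strictly less than the distance from $0$ to every other distinct root of $Q_N$. In the two degenerate cases allowed by the statement, namely $\deg q=0$ or $d=1$ with $\zeta_1=0$, the polynomial $q$ has no nonzero roots, $Q_N(z)=b_{\deg q}z^{N-\deg q}$, and indeed $m=\deg Q_N$; in the remaining case the assumption $\varepsilon<(\max_{1\le j\le d}|\zeta_j|)^{-1}$ gives $\varepsilon<1/|\zeta_j|$ for every nonzero $\zeta_j$, which is exactly the required separation of $0$ from the other distinct roots $1/\zeta_j$ of $Q_N$. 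Corollary \ref{CorPHasExactlymRoots} therefore yields some $\delta_N>0$ such that any polynomial in $P_N(\mathbb{C})$ within coefficient-distance $\delta_N$ of $Q_N$ has exactly $N-\deg q$ roots inside $B(0,\varepsilon)$.

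To close the argument I would observe that when $\deg p=N$, the reciprocal $\tilde p(z)=a_0z^N+a_1z^{N-1}+\cdots+a_N$ has coefficients $a_0,\dots,a_N$, while $Q_N$ has coefficients $b_0,\dots,b_N$ (with $b_i=0$ for $i>\deg q$), so the coefficient distance between $\tilde p$ and $Q_N$ in $P_N(\mathbb{C})$ is bounded by $\max_{0\le i\le n}|a_i-b_i|$. Setting $\delta=\min\bigl(|b_{\deg q}|,\delta_{\deg q},\delta_{\deg q+1},\dots,\delta_n\bigr)$ then delivers the conclusion for every admissible $N$ at once; translating the $N-\deg q=\deg p-\deg q$ roots of $\tilde p$ in $B(0,\varepsilon)$ back through Lemma \ref{LemReciprocalPolyRootMultiplicities} (using $\tilde p(0)=a_{\deg p}\neq 0$, so these roots are nonzero) yields exactly $\deg p-\deg q$ roots of $p$ with $|\lambda|>1/\varepsilon$, and no others can exist because any such root of $p$ must arise this way. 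The main obstacle is precisely the interplay between the varying degree $\deg p$ of the perturbed polynomial and the fixed-degree setting of Corollary \ref{CorPHasExactlymRoots}, which is why the proof loops over $N$ and takes the minimum of the $\delta_N$'s rather than invoking the corollary a single time.
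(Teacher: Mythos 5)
Your proof is correct, and it follows the same basic strategy as the paper's: pass to the reciprocal side, apply Corollary~\ref{CorPHasExactlymRoots} to $z^{\deg p-\deg q}\tilde q(z)$ at the root $0$ of multiplicity $\deg p-\deg q$, check the required separation hypothesis (either $m=\deg Q_N$ in the degenerate cases, or $\varepsilon<1/|\zeta_j|$ for all nonzero $\zeta_j$), and then translate roots of $\tilde p$ in $B(0,\varepsilon)$ back to roots of $p$ outside $\overline{B(0,1/\varepsilon)}$ using Lemma~\ref{LemReciprocalPolyRootMultiplicities}, with the final ``no more than $\deg p-\deg q$'' step secured by the same bijection in reverse. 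The one place where you genuinely go beyond the paper is the explicit handling of the fact that the auxiliary unperturbed polynomial $Q_N(z)=z^{N-\deg q}\tilde q(z)$ depends on $N=\deg p$, a quantity determined by $p$ itself and therefore not fixed in advance. The paper's proof simply writes ``apply Corollary~\ref{CorPHasExactlymRoots} to $z^{\deg p-\deg q}\tilde q(z)$ and $\tilde p(z)$'' without flagging that the resulting $\delta$ depends on $\deg p$, which creates an implicit circularity. You resolve it cleanly by first forcing $\deg p\in\{\deg q,\ldots,n\}$ via $\delta\le|b_{\deg q}|$, invoking Corollary~\ref{CorPHasExactlymRoots} separately for each admissible $N$ to obtain $\delta_N$, and then taking $\delta=\min(|b_{\deg q}|,\delta_{\deg q},\ldots,\delta_n)$. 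This mirrors the subsequence trick used inside the proof of Theorem~\ref{Thm1} and is the right way to make the argument watertight; the paper buys brevity at the cost of this precision, while your version buys rigor at the cost of a short loop over finitely many degrees.
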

\begin{proof}
Assume $q(z)$ is a nonzero polynomial. If $\deg q=0$ then the statement follows immediately from the previous proposition. Hence suppose that $\deg q\geq 1$. First, apply Corollary \ref{CorPHasExactlymRoots} to $z^{\deg p - \deg q}\Tilde{q}(z)$ and $\Tilde{p}(z)$ for the root $0$ of $z^{\deg p - \deg q}\Tilde{q}(z)$. This will get us a $\delta>0$ such that if $0<\varepsilon <(\max_{1 \leq j \leq d}|\zeta_j|)^{-1} (=\infty \text{ if $d=1$ and $\zeta_1=0$})$, then if  $\max_{0\leq i\leq n}|a_i-b_i|<\delta$, then $\Tilde{p}(z)$ has exactly $\deg p - \deg q$ roots whose distance from $0$ is less than $\varepsilon$ and none of these roots are zero since $\Tilde{p}(0)\not =0$. This means that $p(z)$ has at least $\deg p - \deg q$ nonzero roots whose distance from $0$ is greater than $\varepsilon^{-1}$. But it cannot have more than $\deg p - \deg q$ roots whose distance from $0$ is greater than $\varepsilon^{-1}$ since if it did then they would all have to be nonzero and hence you would have more than $\deg p - \deg q$ roots of $\Tilde{p}(z)$ whose distance from $0$ is less than $\varepsilon$, a contradiction.
\end{proof}

\begin{theorem}[Root separation including $\boldsymbol{\infty}$] \label{ThmRootSepincludingInfty}
Suppose $q(z)$ is a nonzero polynomial and, if $\deg q\geq 1$, let $\zeta_1, \ldots, \zeta_d$ be all the distinct roots of $q(z)$ with the root $\zeta_j$ having multiplicity $m_j$, respectively, for $j=1,\ldots, d$. Let $\varepsilon$ be given such that \begin{align}
0<\varepsilon <\min\left\{\omega, \psi \right\},
\end{align} 
where we define 
\begin{align}
    \omega =\min_{1\leq i,j\leq d, i\not=j}|\zeta_i - \zeta_j|,\;\;\psi = \left(\max_{1 \leq j \leq d}|\zeta_j|\right)^{-1},
\end{align}
with $\omega =\infty$, if $\deg q=0$ or $d=1$ and $\psi = \infty$, if $\deg q=0$ or $d=1$ and $\zeta_1=0$. Then there exists a $\delta>0$ such that if $\max_{0\leq i\leq n}|a_i-b_i|<\delta$, then $p$ has exactly $\deg p - \deg q$ roots whose distance from $0$ is greater than $\frac{1}{\varepsilon}$ and, if $\deg q\geq 1$, then $p$ has exactly $m_j$ roots whose distance from $\zeta_j$ is less than $\varepsilon$ for all $j=1,\ldots,d$.
\end{theorem}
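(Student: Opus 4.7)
The plan is to deduce this theorem by combining Corollary \ref{cor2} and Corollary \ref{Cor3HasExactlymRoots}, both already proved. The hypothesis $0 < \varepsilon < \min\{\omega, \psi\}$ is tailored precisely so that both corollaries apply to the given $\varepsilon$: the bound $\varepsilon < \omega$ is the hypothesis of Corollary \ref{cor2} in the case $d > 1$ (and is vacuously satisfied in the remaining cases, where Corollary \ref{cor2} allows any $\varepsilon > 0$), while $\varepsilon < \psi$ is the hypothesis of Corollary \ref{Cor3HasExactlymRoots} in the nontrivial setting (and again vacuous in the edge cases $\deg q = 0$, or $d=1$ with $\zeta_1=0$). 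So no additional preparation of $\varepsilon$ is needed.

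First, I would invoke Corollary \ref{cor2} with the given $\varepsilon$ to extract a $\delta_1 > 0$ such that whenever $\max_{0\leq i \leq n}|a_i - b_i| < \delta_1$, the polynomial $p$ has exactly $m_j$ roots within distance $\varepsilon$ of $\zeta_j$ for each $j = 1,\ldots,d$ (this half of the conclusion being vacuous when $\deg q = 0$). Second, I would invoke Corollary \ref{Cor3HasExactlymRoots} with the same $\varepsilon$ to extract a $\delta_2 > 0$ such that $\max_i|a_i - b_i| < \delta_2$ forces $p$ to have exactly $\deg p - \deg q$ roots of modulus greater than $1/\varepsilon$. Set $\delta := \min\{\delta_1, \delta_2\}$; since both $\delta_j > 0$, so is $\delta$, and the hypothesis $\max_i|a_i - b_i| < \delta$ triggers both conclusions simultaneously, giving the theorem.

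There is no substantive obstacle: all of the real work has already been done in the proofs of the two corollaries, and this theorem is essentially a packaging statement that unifies Figure \ref{Fig1}(d) with Figure \ref{Fig2}(b) into the composite picture of Figure \ref{Fig2}(c). In particular, I do not need to verify any disjointness between the balls $B(\zeta_j, \varepsilon)$ and the exterior region $\mathbb{C} \setminus \overline{B(0, 1/\varepsilon)}$: the theorem's two conclusions are separate cardinality statements about the roots of $p$, each established independently by its corresponding corollary, so the minimum $\delta$ delivers them simultaneously without any further reconciliation.
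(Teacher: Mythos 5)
Your proof is correct and follows essentially the same route as the paper: invoke Corollary \ref{cor2} for $\delta_1$, Corollary \ref{Cor3HasExactlymRoots} for $\delta_2$, and take $\delta=\min\{\delta_1,\delta_2\}$. The only cosmetic difference is that the paper explicitly sets $\delta=\delta_2$ in the $\deg q=0$ case (since $\delta_1$ is then undefined), whereas you address that edge case by observing vacuity; the substance is identical, and your remark that no reconciliation of disjointness is needed is also consistent with the paper's treatment (that observation is deferred to Remark \ref{RemRootGroupings}).
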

\begin{proof}
This is an immediate corollary of Corollaries 2 and 3. Indeed, let $\varepsilon$ be chosen such that (\ref{BestConstrianedVarepsilon}) is true. Then by Corollary 2, if $\deg q\geq 1$, we know that there exists a $\delta_1>0$ such that if $\max_{0\leq i\leq n}|a_i-b_i|<\delta_1$ then $p$ has exactly $m_j$ roots whose distance from $\zeta_j$ is less than $\varepsilon$ for all $j=1,\ldots,d$. By Corollary 3 we know that there exists a $\delta_2>0$ such that if $\max_{0\leq i\leq n}|a_i-b_i|<\delta_2$ then $p$ has exactly $\deg p - \deg q$ roots whose distance from $0$ is greater than $\frac{1}{\varepsilon}$. Let
\begin{align}
    \delta = \left\{ 
\begin{array}{cc}
\delta_2 & \text{if }\deg q=0, \\ 
\min\{\delta_1, \delta_2\}& \text{otherwise.}
\end{array}
\right.
\end{align}
Then it follows that $\delta>0$ and if $\max_{0\leq i\leq n}|a_i-b_i|<\delta$ then $p$ has exactly $\deg p - \deg q$ roots whose distance from $0$ is greater than $\frac{1}{\varepsilon}$ and, if $\deg q \geq 1$, $p$ has exactly $m_j$ roots whose distance from $\zeta_j$ is less than $\varepsilon$ for all $j=1,\ldots,d$. This proves the corollary.
\end{proof}

At this point, the reader may find it helpful to look at Figure \ref{Fig2} which illustrates the statements in Proposition \ref{Prop1AtLeastZeroRoots}, Corollary \ref{Cor3HasExactlymRoots}, and Theorem \ref{ThmRootSepincludingInfty}.

\begin{remark}[The balls partition the roots of $\boldsymbol{q,p}$ into disjoint groups]\label{RemRootGroupings}
It should be remarked that, for the $\varepsilon$ in Theorem \ref{ThmRootSepincludingInfty}, one of the key geometric features is that for the collection of $\varepsilon$-balls $B(\zeta_j, \varepsilon)$ centered at the distinct roots $\zeta_j$ of the polynomial $q$ (if any) ($0< j\leq d$) we have $\zeta_j\not\in B(\zeta_i,\varepsilon)$ for $0< i,j\leq d$ with $i\neq j$. In addition, for the $\frac{1}{\varepsilon}$-ball $B(0,\frac{1}{\varepsilon})$ centered at $0$ we also have $\zeta_j\in B(0,\frac{1}{\varepsilon})$ for each $j$. Then, by Theorem \ref{ThmRootSepincludingInfty}, it follows that if $p$ is a polynomial near $q$ [in the sense of Theorem \ref{ThmRootSepincludingInfty}, i.e., that $\delta>0$ and $\max_{0\leq i\leq n}|a_i-b_i|<\delta$ in terms of the coefficients of $q,p$ in (\ref{Def_Poly_q}), (\ref{Def_Poly_p})], then denoting its roots (counting multiplicities) by $\lambda_k,1\leq k\leq \deg p$, it follows that these roots may be grouped as $\lambda_{i\ell}, 1\leq \ell \leq m_i, 0< i\leq d$ and $\lambda_k^{(\infty)},\deg q<k\leq \deg p$ as follows: for all $i,j$ such that $0< i,j\leq d$ with $i\neq j$ and all $\ell$ such that $1\leq \ell\leq m_i$ and all $k$ such that $\deg q<k\leq \deg p$ we have
\begin{align}
    B\left(0,\frac{1}{\varepsilon}\right)\ni \zeta_j\not\in B(\zeta_i,\varepsilon)\ni \lambda_{i,\ell}\not\in\mathbb{C}\setminus \overline{B\left(0,\frac{1}{\varepsilon}\right)}\ni \lambda_{k}^{(\infty)}.
\end{align}
This is illustrated in Figure \ref{Fig2}.(c) and Figure \ref{FigRemarkConclusion}.
\end{remark}

\begin{remark}[The case $\boldsymbol{q}$ is the zero polynomial, i.e., $\boldsymbol{q=0}$]
Our results cover any polynomial $q\in P_n(\mathbb{C})$ except the zero polynomial, i.e., the case $q=0$. In this case there is no theory comparable to the theory developed above as the following examples illustrate: The constant zero polynomial $q(z)=0\in P_n(\mathbb{C})$ has every $\zeta\in \mathbb{C}$ as a root, but no matter how small $|a|$, the constant polynomial $p(z)=a\in P_n(\mathbb{C})$ has no roots, if $a\neq 0$. Now let $\lambda_i\in \mathbb{C}$, for $i=1,\ldots, \ell$ with $\ell\in \mathbb{N}$ and $\ell\leq n$. Consider the polynomial $p(z)=a\prod_{i=1}^\ell (z-\lambda_i)\in P_n(\mathbb{C})$ with $0\neq a\in \mathbb{C}$. We are completely free to choose these roots $\lambda_i$ of $p$ in $\mathbb{C}$, yet the leading order coefficient $a$ of $p$ can vary independent of those roots and hence the coefficients of $p$ can be made as small as we like by making $|a|$ sufficiently small.
\end{remark}

\section{Quantitative Results}\label{SecQuantitativeResults}
The following theorem is the main result of this section. It can be considered a quantitative result that complements the qualitative results in Sec. \ref{Qualitative Section}. For instance, given such an $\varepsilon>0$ in the hypotheses of Theorem \ref{ThmRootSepincludingInfty}, our theorem below can be used to find such a $\delta>0$ so that the conclusion of that theorem is true (see also Remark \ref{RemGenThmQuant}). We will give a simple and elementary proof of this theorem using Proposition \ref{PropPrimPertBddMonicPolys}, which itself is proved with Lemma \ref{LemRootUpperBoundMonicPoly}, below. We also provide a concrete example (Example \ref{ExNumericalOfQuantitativeMainThm}) using these results.
\begin{theorem}\label{ThmQuantOstrowskiLike}
Let $p,q\in P_n(\mathbb{C})$ be polynomials having the form (\ref{Def_Poly_p}) and (\ref{Def_Poly_q}), respectively. Assume $q(0)\neq 0$. Let $\zeta_i$, $0<i\leq \deg q$ denote all the roots (counting multiplicities) of $q$, respectively. Suppose
\begin{align}
|a_0-b_0|\leq \frac{1}{2}|b_0|,\;\;|a_{\deg q}-b_{\deg q}|< |b_{\deg q}|,\;\;
    \max_{0\leq i\leq n}|a_i-b_i|\leq \sum_{i=0}^n\left|\frac{b_i}{b_0}\right|,\label{HypThmQuantOstrowskiLike}
\end{align}
and, in the case $\deg q\not =0$,
\begin{align}
   \max_{0\leq i\leq n}|a_i-b_i|\leq \frac{1}{C}\left(2\max_{0< i\leq \deg q}|\zeta_i|\right)^{-n},\label{Hyp2ThmQuantOstrowskiLike}
\end{align}
where $C$ is given by (\ref{C}). If $\lambda\in \mathbb{C}$ and
\begin{align}
    p(\lambda)=0
\end{align}
then
\begin{align}
    |\lambda|^{-1}\leq C^{1/n} \left(\max_{0\leq i\leq n}\left|a_i-b_i\right|\right)^{1/n} \label{infintyrootforexample}
\end{align}
or
\begin{align} \label{Rootestimateforexample1}
    \min_{0<i\leq \deg q}|\lambda-\zeta_i|\leq 2\left(\max_{0< i\leq \deg q}|\zeta_i|\right)^2C^{1/n} \left(\max_{0\leq i\leq n}\left|a_i-b_i\right|\right)^{1/n}.
\end{align}

\end{theorem}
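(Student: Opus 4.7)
My plan is to reduce the statement to an Ostrowski-type perturbation bound for monic polynomials via the reciprocal polynomial construction of Definition \ref{DefReciprocalPoly}. The hypothesis $|a_0-b_0|\leq \frac{1}{2}|b_0|$ combined with $b_0=q(0)\neq 0$ forces $|a_0|\geq \frac{1}{2}|b_0|>0$, so every root $\lambda$ of $p$ is automatically nonzero; similarly $|a_{\deg q}-b_{\deg q}|<|b_{\deg q}|$ yields $a_{\deg q}\neq 0$, hence $\deg p\geq \deg q$. Setting $\xi=1/\lambda$, Lemma \ref{LemReciprocalPolyRootMultiplicities} identifies $\xi$ as a root of $\tilde p(z)$, whereas the roots of the companion polynomial $z^{\deg p-\deg q}\tilde q(z)$ (which has the same degree as $\tilde p$) are exactly the points $1/\zeta_i$ together with $0$ of multiplicity $\deg p-\deg q$.

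Next, I normalize by dividing by the leading coefficients to obtain monic polynomials $\hat p(z)=\tilde p(z)/a_0$ and $\hat q(z)=z^{\deg p-\deg q}\tilde q(z)/b_0$ of common degree $\deg p$. Writing $a_k/a_0-b_k/b_0=((a_k-b_k)b_0-b_k(a_0-b_0))/(a_0 b_0)$, using $|a_0|\geq|b_0|/2$, zero-padding $b_k$ for $k>\deg q$, and the hypothesis $\max_{0\leq i\leq n}|a_i-b_i|\leq \sum_{i=0}^n|b_i/b_0|$, I intend to bound the coefficient-wise difference of $\hat p$ and $\hat q$ by $C\cdot\max_i|a_i-b_i|$, where $C$ is the constant appearing in (\ref{C}). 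Lemma \ref{LemRootUpperBoundMonicPoly} (a Cauchy-type root bound) applied to both $\hat p$ and $\hat q$ then confines all their roots to a common disk of explicit radius.

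With this setup, Proposition \ref{PropPrimPertBddMonicPolys} (the Ostrowski-type perturbation estimate) produces, for the root $\xi$ of $\hat p$, a root $\mu$ of $\hat q$ satisfying
\begin{align*}
|\xi-\mu|\leq C^{1/n}\left(\max_{0\leq i\leq n}|a_i-b_i|\right)^{1/n}.
\end{align*}
The proof then splits on $\mu$: if $\mu=0$, then $|\lambda|^{-1}=|\xi|=|\xi-\mu|$ directly yields (\ref{infintyrootforexample}); otherwise $\mu=1/\zeta_i$ for some $0<i\leq \deg q$, and hypothesis (\ref{Hyp2ThmQuantOstrowskiLike}) is precisely calibrated so that the displayed bound gives $|\xi-1/\zeta_i|\leq 1/(2\max_i|\zeta_i|)$. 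The triangle inequality then forces $|\xi|\geq 1/(2\max_i|\zeta_i|)$, hence $|\lambda|\leq 2\max_i|\zeta_i|$, and the identity $|\lambda-\zeta_i|=|\lambda||\zeta_i|\,|\xi-1/\zeta_i|$ delivers (\ref{Rootestimateforexample1}).

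The main technical obstacle I expect is not in the overall scheme but in the bookkeeping that produces the explicit constant $C$ of (\ref{C}): balancing contributions of $|a_0|^{-1}$ versus $|b_0|^{-1}$, correctly handling coefficients $a_k$ for $\deg q<k\leq \deg p$ (for which the matching $b_k$ vanishes), and checking that the hypothesis $\max_i|a_i-b_i|\leq \sum_i|b_i/b_0|$ is exactly what is needed so that cross terms combine into the stated $C$. The second subtle point is verifying that (\ref{Hyp2ThmQuantOstrowskiLike}) is quantitatively tight enough to force $|\xi-1/\zeta_i|\leq 1/(2\max_i|\zeta_i|)$, so that the Lipschitz estimate for $\xi\mapsto 1/\xi$ goes through with the factor $2(\max_i|\zeta_i|)^2$ appearing in (\ref{Rootestimateforexample1}) rather than something worse.
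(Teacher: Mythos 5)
Your overall architecture matches the paper exactly: pass to reciprocal polynomials, invoke the Ostrowski-type bound of Proposition \ref{PropPrimPertBddMonicPolys}, and split on whether the nearest root of the reference polynomial is $0$ or $1/\zeta_j$; the reverse-triangle-inequality calibration of (\ref{Hyp2ThmQuantOstrowskiLike}) to force $|\lambda|\leq 2\max_i|\zeta_i|$ is also the same. There is, however, one concrete gap. You work with $\hat p=\tilde p/a_0$ and $\hat q=z^{\deg p-\deg q}\tilde q/b_0$ ``of common degree $\deg p$,'' but Proposition \ref{PropPrimPertBddMonicPolys} is stated for a pair $f,g\in P_n(\mathbb{C})$ whose leading coefficients sit in the $z^n$ slot (and whose error constant $C$ in (\ref{C}) depends explicitly on $n$ through the sums and the powers $D^{n-i}$). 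If $\deg p<n$, your $\hat p,\hat q$ are not of degree $n$, so the proposition as stated does not apply; applying a rederived version at degree $\deg p$ would produce a constant and an exponent $1/\deg p$ that do not match the claimed $C^{1/n}$. The paper fixes this by padding both reciprocals with $z^{n-\deg p}$, i.e.\ taking $f(z)=z^{n-\deg p}\tilde p(z)$ and $g(z)=z^{n-\deg q}\tilde q(z)$, which makes them degree-$n$ polynomials whose coefficient sequences are exactly the original $a_i$, $b_i$ (zero-padded), so Proposition \ref{PropPrimPertBddMonicPolys} applies verbatim. Your step should be corrected to include that $z^{n-\deg p}$ factor. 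Separately, your preliminary normalization to monic polynomials is unnecessary, since Proposition \ref{PropPrimPertBddMonicPolys} already handles nonmonic $f,g$ with $b_0\neq 0$ and performs that normalization internally; redoing it risks a mismatch with the exact constant (\ref{C}), and the phrase about bounding ``the coefficient-wise difference of $\hat p$ and $\hat q$ by $C\cdot\max_i|a_i-b_i|$'' misdescribes what $C$ does (it enters the root-distance bound, not a coefficient bound).
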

\begin{proof}
First, the hypotheses (\ref{HypThmQuantOstrowskiLike}) imply that
\begin{align*}
    a_0\not =0,\;\;a_{\deg q}\neq 0
\end{align*}
and hence
\begin{align*}
    \deg p\geq \deg q.
\end{align*}
Next, since $\deg p-\deg q\geq 0$ with $p(0)=a_0\not =0$ and $0\not =q(0)=a_0$, we can define the degree $n$ polynomials $f,g\in P_n(\mathbb{C})$ in terms of the reciprocal polynomials of $p$ and $q$ by
\begin{align}
    f(z)=z^{n-\deg p}\Tilde{p}(z),\; g(z)=z^{n-\deg p}z^{\deg p-\deg q}\Tilde{q}(z)
\end{align}
in which the polynomials $\Tilde{p}(z)$ and $z^{\deg p-\deg q}\Tilde{q}(z)$ have the form (\ref{Def_RPoly_p}) and (\ref{DefModifiedReciprocalPolyForq}), respectively. Let $\zeta_i$, $0<i\leq \deg q$ denote all the roots (counting multiplicities) of $q$. Then 
\begin{align}
    \Tilde{\zeta}_i=\zeta_i^{-1}\qquad(0<i\leq \deg q),
\end{align}
are all the nonzero roots (counting multiplicities) of $g$ and
\begin{align}
    \Tilde{\zeta}_i=0\qquad(\deg q<i\leq n),
\end{align}
are all the zero roots (counting multiplicities) of $g$.  Suppose $p(\lambda)=0$. Then $\lambda\not =0$ and $f(\lambda^{-1})=0$ so by hypotheses (\ref{HypThmQuantOstrowskiLike}) and Proposition \ref{PropPrimPertBddMonicPolys} below it follows that
\begin{align}
    \min_{1\leq i\leq n}|\lambda^{-1}-\Tilde{\zeta}_i|\leq C^{1/n} \left(\max_{0\leq i\leq n}\left|a_i-b_i\right|\right)^{1/n},
\end{align}
with $C$ given by (\ref{C}). Thus either that minimum is at a $j$ with $\Tilde{\zeta}_j=0$ or with $\Tilde{\zeta}_j=\zeta_j^{-1}$ in which case either
\begin{align}
    |\lambda|^{-1}=\min_{1\leq i\leq n}|\lambda^{-1}-\Tilde{\zeta}_i|\leq C^{1/n} \left(\max_{0\leq i\leq n}\left|a_i-b_i\right|\right)^{1/n}
\end{align}
or
\begin{align}
    |\lambda^{-1}-\zeta_j^{-1}|=\min_{1\leq i\leq n}|\lambda^{-1}-\Tilde{\zeta}_i|\leq C^{1/n} \left(\max_{0\leq i\leq n}\left|a_i-b_i\right|\right)^{1/n}.
\end{align}
In the latter case, this implies by the reverse triangle inequality and hypothesis (\ref{Hyp2ThmQuantOstrowskiLike}) that
\begin{align}
   |\lambda|^{-1}&\geq |\zeta_j|^{-1}-C^{1/n} \left(\max_{0\leq i\leq n}\left|a_i-b_i\right|\right)^{1/n}\\
   &\geq \min_{0< i\leq \deg q}|\zeta_i|^{-1}-C^{1/n} \left(\max_{0\leq i\leq n}\left|a_i-b_i\right|\right)^{1/n}\\
   &=\left(\max_{0< i\leq \deg q}|\zeta_i|\right)^{-1}-C^{1/n} \left(\max_{0\leq i\leq n}\left|a_i-b_i\right|\right)^{1/n}\\
   &\geq\frac{1}{2}\left(\max_{0< i\leq \deg q}|\zeta_i|\right)^{-1}
\end{align}
and hence
\begin{align}
    |\lambda-\zeta_j|&=|\lambda||\zeta_j||\lambda^{-1}-\zeta_j^{-1}|\\
    &\leq |\lambda||\zeta_j|C^{1/n} \left(\max_{0\leq i\leq n}\left|a_i-b_i\right|\right)^{1/n}\\
    &\leq 2|\zeta_j|\max_{0< i\leq \deg q}|\zeta_i|C^{1/n} \left(\max_{0\leq i\leq n}\left|a_i-b_i\right|\right)^{1/n}\\
    &\leq 2\left(\max_{0< i\leq \deg q}|\zeta_i|\right)^2C^{1/n} \left(\max_{0\leq i\leq n}\left|a_i-b_i\right|\right)^{1/n}.
\end{align}
The proof of the theorem now follows.
\end{proof}

\begin{remark}\label{RemGenThmQuant}
Notice in Theorem \ref{ThmQuantOstrowskiLike} we have the hypothesis that $q(0)\not=0$. This hypothesis can be removed but the trade-off is a more complicated statement for Theorem \ref{ThmQuantOstrowskiLike}. Indeed, if $q(0)=0$ then the starting point to prove such a theorem in this case is to replace the polynomials $p(z),q(z)$ in Theorem \ref{ThmQuantOstrowskiLike} by the shifted polynomials $p(z+z_0),q(z+z_0)$ for any fixed $z_0$ such that $q(z_0)\neq 0$. The reader will find it to be a good exercise to formulate and prove a version of Theorem \ref{ThmQuantOstrowskiLike} in this case. 
\end{remark}

\begin{example}\label{ExNumericalOfQuantitativeMainThm}
We will apply Theorem \ref{ThmQuantOstrowskiLike} to the polynomials
\begin{align}
p(z)=\sum _{i=0}^4a_iz^i,\;\;q(z) = z^3-z^2+z-1=(z-1)(z-\sqrt{-1})(z+\sqrt{-1})
\end{align}
in $P_n(\mathbb{C})$ with $n=4$. Then the coefficients and roots of $q(z)$ are
\begin{align}
&b_4=0,\; b_{\deg q} =b_3 = 1,
\; b_2=-1,
\; b_1=1,
\; b_0 = q(0) = -1\not =0,\\
&\zeta _{1} =1, \; \zeta_2 =\sqrt{-1} ,\; \zeta_3=-\sqrt{-1}.
\end{align}
Next, we calculate 
\begin{align}
    &\sum_{i=0}^n\left|\frac{b_i}{b_0}\right|=4,\;\;D=\left[1+\sum_{i=0}^n\frac{2}{|b_0|^2}(|b_0|+|b_i|)\right]\sum_{i=0}^n\left|\frac{b_i}{b_0}\right|=76,\\
    C&=\sum_{i=1}^n\frac{2}{|b_0|^2}(|b_0|+|b_i|)D^{n-i}=1779314.
\end{align}
Then the hypotheses (\ref{HypThmQuantOstrowskiLike}) and (\ref{Hyp2ThmQuantOstrowskiLike}) in Theorem \ref{ThmQuantOstrowskiLike} in this example become:
\begin{align*}
     |a_0-b_0| &\leq \frac{1}{2},\;\;\left(\text{i.e., }|a_0-b_0| \leq \frac{1}{2} |b_0|\right)\\
     |a_3-b_3| &\leq 1,\;\;\left(\text{i.e., }|a_{\deg q}-b_{\deg q}| < |b_{\deg q}|\right) \\
     \max_{0\leq i\leq 4}|a_i-b_i| &\leq 4,\;\; \left(\text{i.e., }\max_{0\leq i\leq n}|a_i-b_i| \leq \sum_{i=0}^n\left|\frac{b_i}{b_0}\right|\right)\\
     \max_{0\leq i\leq 4}|a_i-b_i| &\leq \frac{1}{28469024}.\;\; \left(\text{i.e., }\max_{0\leq i\leq n}|a_i-b_i| \leq \frac{1}{C}\left(2\max_{0< i\leq \deg q}|\zeta_i|\right)^{-n} \right)
\end{align*}
From this it follows that the hypotheses of Theorem \ref{ThmQuantOstrowskiLike} are satisfied in this example if and only if the last constraint on the coefficients of $p(z)$ hold. In this case, $p(z)$ has four roots $\lambda_1,\lambda_2,\lambda_3,\lambda_4$ and by Theorem \ref{ThmQuantOstrowskiLike} each root $\lambda$ must satisfy the inequalities (\ref{infintyrootforexample}) and (\ref{Rootestimateforexample1}) which in this case becomes
\begin{align}
    |\lambda|^{-1}\leq C^{1/4} \left(\max_{0\leq i\leq 4}\left|a_i-b_i\right|\right)^{1/4}
\end{align}
or
\begin{align}
    \min_{1\leq i\leq 3}|\lambda-\zeta_i|\leq 2C^{1/4} \left(\max_{0\leq i\leq 4}\left|a_i-b_i\right|\right)^{1/4}.
\end{align}

For instance, all this is true for the whole family of polynomials
\begin{align}
    p(z)&=\eta z^4+(1+\eta)z^3+(-1+\eta)z^2+(1+\eta)z+(-1+\eta),
\end{align}
provided 
\begin{align}
    |\eta|\leq \frac{1}{28469024}
\end{align}
since in this case the coefficients of $ p(z)$ are just those of $q(z)$ perturbed by $\eta$, i.e.,
\begin{align}
    a_4=b_4+\eta,\;\;a_3=b_3+\eta,\;\;a_2=b_2+\eta,\;\;a_1=b_1+\eta,\;\;a_0=b_0+\eta,
\end{align}
and hence
\begin{align}
    \max_{0\leq i\leq 4}|a_i-b_i| =|\eta|\leq \frac{1}{28469024}.
\end{align}
Thus, it has four roots $\lambda_1,\lambda_2,\lambda_3,\lambda_4$, and each such root $\lambda$ must satisfy
\begin{align}
    |\lambda|^{-1}\leq C^{1/4} |\eta|^{1/4}\;\; \text{ or } \;\;  \min_{1\leq i\leq 3}|\lambda-\zeta_i|\leq 2C^{1/4}|\eta|^{1/4}.
\end{align}
For example, this is satisfied for the value $\eta=10^{-8}$ so consider the polynomial
\begin{align}
    p(z)&=10^{-8} z^4+(1+10^{-8})z^3+(-1+10^{-8})z^2+(1+10^{-8})z+(-1+10^{-8}).
\end{align}
Then a numerical calculation of its roots (using Wolfram $|$ Alpha$^{\circledR}$) yields the approximations 
\begin{gather*}  
\lambda_{1}=1.00000,\\
\lambda_{2}\approx 2. 5000\times 10^{-9}+1. 00000\sqrt{-1},\; \lambda_{3}\approx 2. 5000\times 10^{-9}-1. 00000\sqrt{-1},\\ \lambda_4\approx -1.0\times 10^8 \sqrt{-1}
\end{gather*}
and, according to our theory, each such root $\lambda$ must satisfy 
\begin{align}\label{firstboundappx}
    |\lambda|^{-1}\leq C^{1/4} |\eta|^{1/4}=(.01779314)^{1/4}\approx 0.36523
\end{align}
or
\begin{align}\label{secondboundappx}
    \min_{1\leq i\leq 3}|\lambda-\zeta_i|\leq 2C^{1/4}|\eta|^{1/4}=2(.01779314)^{1/4}\approx 0.73045.
\end{align}
Comparing this to our approximations of the roots, we find that they are in good agreement with our theory since
\begin{align*}
    |\lambda_1-\zeta_1|&=0,  |\lambda_2-\zeta_2|= |\lambda_3-\zeta_3|= 2. 5000\times 10^{-9}\leq 0.73045,\\
    |\lambda_4|^{-1}&=1.0\times 10^{-8}\leq 0.36523.
\end{align*} 
\end{example}

In the rest of this section we prove the remaining statements needed to complete the proof of Theorem \ref{ThmQuantOstrowskiLike}. The following elementary result, which gives an upper bound on all the roots of a monic polynomial, is from \cite[Theorem 1]{97HM} which we will prove here for completeness.
\begin{lemma}\label{LemRootUpperBoundMonicPoly}
Let $f\in P_n(\mathbb{C})$ be a monic polynomial of degree $n\geq 1$, i.e.,
\begin{align}\label{monicf}
    f(z)=a_0z^n+a_{1}z^{n-1}+\cdots+a_n=\sum_{i=0}^na_iz^{n-i},\;a_0=1.
\end{align}
If $\alpha\in \mathbb{C}$ and $f(\alpha)=0$ then 
\begin{align}\label{RootBound}
    |\alpha| \leq \max\left\{1, \sum_{i=1}^n |a_i|\right\}.
\end{align}
\end{lemma}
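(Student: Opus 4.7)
The plan is a short case split on whether $|\alpha|\le 1$. In the first case, the inequality $|\alpha|\le 1\le \max\{1,\sum_{i=1}^n|a_i|\}$ holds trivially, so the whole task is to handle the case $|\alpha|>1$.

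Assuming $|\alpha|>1$, I will use the root equation $f(\alpha)=0$ rewritten, using monicity ($a_0=1$), as
\[
\alpha^n=-\sum_{i=1}^n a_i\,\alpha^{n-i}.
\]
Taking absolute values and applying the triangle inequality gives $|\alpha|^n\le \sum_{i=1}^n|a_i|\,|\alpha|^{n-i}$. The key monotonicity observation is that since $|\alpha|>1$ and $i\ge 1$, we have $|\alpha|^{n-i}\le |\alpha|^{n-1}$ for all $i=1,\ldots,n$. Substituting this bound and factoring out $|\alpha|^{n-1}$ yields
\[
|\alpha|^n\le |\alpha|^{n-1}\sum_{i=1}^n|a_i|,
\]
and dividing through by $|\alpha|^{n-1}>0$ gives $|\alpha|\le \sum_{i=1}^n|a_i|$, which is at most $\max\{1,\sum_{i=1}^n|a_i|\}$.

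Combining the two cases completes the proof. There is no real obstacle here: the argument relies only on the triangle inequality and the fact that $|\alpha|^{n-i}$ is decreasing in $i$ when $|\alpha|>1$. The only minor care needed is making sure the division by $|\alpha|^{n-1}$ is justified, which is immediate since in that case $|\alpha|>1>0$.
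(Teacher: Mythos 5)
Your proof is correct and follows essentially the same argument as the paper: a case split on $|\alpha|\le 1$ versus $|\alpha|>1$, with the second case handled by writing $\alpha^n=-\sum_{i=1}^n a_i\alpha^{n-i}$, applying the triangle inequality, and using $|\alpha|^{1-i}\le 1$ for $i\ge 1$ when $|\alpha|>1$. The paper merely divides by $\alpha^{n-1}$ one step earlier, which is a cosmetic rearrangement of the same computation.
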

\begin{proof}
Suppose $f(\alpha)=0$. Then either $|\alpha|\leq 1$, in which case (\ref{RootBound}) is true, or $|\alpha|> 1$ in which case (\ref{RootBound}) is also true since by triangle inequality
\begin{align}
    |\alpha|=\left|\frac{f(\alpha) -\alpha^n}{\alpha^{n-1}}\right|\leq \sum_{i=1}^n |a_i||\alpha|^{1-i}\leq \sum_{i=1}^n |a_i|.
\end{align}
\end{proof}

The following proposition can give a quantitative result for Theorem \ref{ThmRootSepincludingInfty} that can be used when $\deg p=\deg q$.
\begin{proposition}\label{PropPrimPertBddMonicPolys}
Let $f,g\in P_n(\mathbb{C})$ be two polynomials with
\begin{align}
    f(z)&=a_0z^n+\cdots+a_n=\sum_{i=0}^na_iz^{n-i},\\
    g(z)&=b_0z^n+\cdots+b_n=\sum_{i=0}^nb_iz^{n-i},\;\;b_0\neq 0
\end{align}
such that
\begin{align}
|a_0-b_0|\leq \frac{1}{2}|b_0|,\;\;
   \max_{0\leq i\leq n}|a_i-b_i|\leq \sum_{i=0}^n\left|\frac{b_i}{b_0}\right|.\label{HypPropPrimPertBddMonicPolys}
\end{align}
Let $\beta_1,\ldots, \beta_n$ denote all the roots (counting multiplicities) of $g$. If $\alpha\in \mathbb{C}$ and 
\begin{align}
f(\alpha)=0   
\end{align}
then
\begin{align}\label{Prop2Estimate}
    \min_{1\leq i\leq n}|\alpha-\beta_i|\leq C^{1/n} \left(\max_{0\leq i\leq n}\left|a_i-b_i\right|\right)^{1/n},
\end{align}
where
\begin{align}\label{C}
    C&=\sum_{i=1}^n\frac{2}{|b_0|^2}(|b_0|+|b_i|)D^{n-i},\;\;D=\left[1+\sum_{i=0}^n\frac{2}{|b_0|^2}(|b_0|+|b_i|)\right]\sum_{i=0}^n\left|\frac{b_i}{b_0}\right|.
\end{align}

\end{proposition}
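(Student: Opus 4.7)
The plan is to convert coefficient closeness into root proximity via three complementary estimates: an upper bound on $|\alpha|$ in terms of the coefficients of $g$ alone, an upper bound on $|g(\alpha)|$ in terms of the coefficient gap, and the lower bound $|g(\alpha)|\geq |b_0|(\min_i|\alpha-\beta_i|)^n$ coming from the factorization $g(z)=b_0\prod_{i=1}^n(z-\beta_i)$ via (\ref{ProdDegreeFactoriization}).

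First I would establish $|\alpha|\leq D$. The hypothesis $|a_0-b_0|\leq \frac{1}{2}|b_0|$ forces $|a_0|\geq \frac{1}{2}|b_0|>0$, so $f$ has degree $n$ and Lemma \ref{LemRootUpperBoundMonicPoly} applied to the monic polynomial $f(z)/a_0$ yields $|\alpha|\leq\max\{1,\sum_{i=1}^n|a_i/a_0|\}\leq\sum_{i=0}^n|a_i/a_0|$. From the identity $a_i/a_0-b_i/b_0=(a_i-b_i)/a_0-b_i(a_0-b_0)/(a_0 b_0)$ together with $|a_0|\geq|b_0|/2$, I would derive
\begin{align*}
|a_i/a_0|\leq |b_i/b_0|+\frac{2(|b_0|+|b_i|)}{|b_0|^2}\max_{0\leq j\leq n}|a_j-b_j|,
\end{align*}
and summing over $i=0,\ldots,n$ and invoking the second hypothesis $\max_j|a_j-b_j|\leq\sum_{i=0}^n|b_i/b_0|$ to absorb the perturbation factor produces exactly $\sum_{i=0}^n|a_i/a_0|\leq D$, hence $|\alpha|\leq D$.

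Next I would exploit $f(\alpha)=0$ to eliminate $\alpha^n$ from $g(\alpha)=b_0\alpha^n+\sum_{i=1}^n b_i\alpha^{n-i}$: dividing $a_0\alpha^n=-\sum_{i=1}^n a_i\alpha^{n-i}$ by $a_0$ and substituting gives $g(\alpha)=\sum_{i=1}^n(b_i-(b_0/a_0)a_i)\alpha^{n-i}$. Writing each coefficient as $b_i-(b_0/a_0)a_i=[b_i(a_0-b_0)+b_0(b_i-a_i)]/a_0$ and using $|a_0|\geq|b_0|/2$ bounds it by $(2(|b_0|+|b_i|)/|b_0|)\max_j|a_j-b_j|$. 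Combining this with $|g(\alpha)|\geq|b_0|(\min_i|\alpha-\beta_i|)^n$ and the previously established $|\alpha|\leq D$, I would obtain
\begin{align*}
(\min_i|\alpha-\beta_i|)^n\leq \max_j|a_j-b_j|\sum_{i=1}^n\frac{2(|b_0|+|b_i|)}{|b_0|^2}D^{n-i}=C\max_j|a_j-b_j|,
\end{align*}
and taking $n$-th roots yields (\ref{Prop2Estimate}).

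The main obstacle I anticipate is the bookkeeping in the first step: the form of $D$ in (\ref{C}) is somewhat intricate, and one has to use both hypotheses in (\ref{HypPropPrimPertBddMonicPolys}) in the right order---the first to guarantee that dividing by $a_0$ only loses a factor of two, and the second to replace the residual $\max_j|a_j-b_j|$ by the coefficient sum $\sum_i|b_i/b_0|$ so that the final bound on $|\alpha|$ depends solely on the coefficients of $g$. Once $|\alpha|\leq D$ is in hand, the remaining estimates are essentially the triangle inequality applied to the telescoping identity for $b_i-(b_0/a_0)a_i$.
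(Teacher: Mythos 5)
Your proposal is correct and follows essentially the same route as the paper: both establish $|a_0|\geq\frac{1}{2}|b_0|$, bound $\sum_{i=0}^n|a_i/a_0|\leq D$ via the same algebraic identity for $a_i/a_0-b_i/b_0$ together with the second hypothesis, exploit $f(\alpha)=0$ so that the $\alpha^n$ term drops out of $\frac{1}{b_0}g(\alpha)$, and then combine with $|g(\alpha)|=|b_0|\prod_i|\alpha-\beta_i|$ and Lemma~\ref{LemRootUpperBoundMonicPoly}. Your elimination $g(\alpha)=\sum_{i=1}^n(b_i-\frac{b_0}{a_0}a_i)\alpha^{n-i}$ is just the paper's $\frac{1}{a_0}f(\alpha)-\frac{1}{b_0}g(\alpha)=\sum_{i=1}^n(\frac{a_i}{a_0}-\frac{b_i}{b_0})\alpha^{n-i}$ scaled by $-b_0$, so the two arguments are interchangeable.
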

\begin{proof}
First, by the reverse triangle inequality and the hypothesis (\ref{HypPropPrimPertBddMonicPolys}) we have
\begin{align*}
    -|a_0|+|b_0|\leq|a_0-b_0|\leq \frac{1}{2}|b_0|
\end{align*}
which implies
\begin{align*}
    |a_0|\geq \frac{1}{2}|b_0|>0.
\end{align*}
From this and the triangle inequality it follows that
\begin{align*}
    \left|\frac{a_i}{a_0}-\frac{b_i}{b_0}\right|&=\frac{1}{|a_0||b_0|}|(a_i-b_i)b_0-(a_0-b_0)b_i|\leq \frac{2}{|b_0|^2}|(a_i-b_i)b_0-(a_0-b_0)b_i|\\ &\leq\frac{2}{|b_0|^2}(|a_i-b_i||b_0|+|a_0-b_0||b_i|)\leq \frac{2}{|b_0|^2}(|b_0|+|b_i|)\max_{0\leq i\leq n}|a_i-b_i|
\end{align*}
for each $i=0,\ldots, n$.
From this, the hypothesis (\ref{HypPropPrimPertBddMonicPolys}), and the triangle inequality, we have
\begin{align*}
    \max \left\{1,\sum_{i=1}^n\left|\frac{a_i}{a_0}\right|\right\}&\leq 
    \sum_{i=0}^n\left|\frac{a_i}{a_0}\right|\\
    &\leq\sum_{i=0}^n\left|\frac{a_i}{a_0}-\frac{b_i}{b_0}\right|+\sum_{i=0}^n\left|\frac{b_i}{b_0}\right|\\
    &\leq \sum_{i=0}^n\frac{2}{|b_0|^2}(|b_0|+|b_i|)\max_{0\leq i\leq n}|a_i-b_i|+\sum_{i=0}^n\left|\frac{b_i}{b_0}\right|\\
    &\leq \left[\sum_{i=0}^n\frac{2}{|b_0|^2}(|b_0|+|b_i|)\right]\sum_{i=0}^n\left|\frac{b_i}{b_0}\right|+\sum_{i=0}^n\left|\frac{b_i}{b_0}\right|\\
    &=\left[1+\sum_{i=0}^n\frac{2}{|b_0|^2}(|b_0|+|b_i|)\right]\sum_{i=0}^n\left|\frac{b_i}{b_0}\right|=D.
\end{align*}

Now suppose $f(\alpha)= 0$. Then, by these inequalities, the triangle inequality, and Lemma \ref{LemRootUpperBoundMonicPoly}, we have
\begin{align*}
\left(\min_{1\leq i\leq n}|\alpha - \beta_i|\right)^n&\leq \prod_{i=1}^n|\alpha - \beta_i|= \left|\frac{1}{b_0}g(\alpha)\right|  = \left|\frac{1}{a_0}f(\alpha)-\frac{1}{b_0}g(\alpha)\right|\\
&\leq \sum_{i=1}^n\left|\frac{a_i}{a_0}-\frac{b_i}{b_0}\right||\alpha|^{n-i}\\
&\leq \sum_{i=1}^n\left|\frac{a_i}{a_0}-\frac{b_i}{b_0}\right|\left(\max \left\{1,\sum_{j=1}^n\left|\frac{a_j}{a_0}\right|\right\}\right)^{n-i}\\
&\leq \max_{0\leq i\leq n}|a_i-b_i|\sum_{i=1}^n\frac{2}{|b_0|^2}(|b_0|+|b_i|)D^{n-i}\\
&= C \max_{0\leq i\leq n}|a_i-b_i|.
\end{align*}
The proof follows now immediately from this by taking the $n$th root.
\end{proof}

\section{Applications}\label{SecApplications}
In this section we will consider an application of our results above in stability theory of multivariate polynomials. An important result in this theory is Theorem \ref{11RPThm} below. Our goal here is to provide an elementary proof of it using just the results above on single variable polynomials, namely, Theorem \ref{Thm1}, and some basic properties of multivariate polynomials described below.

\subsection{Multivariate polynomials}
Let $\mathbb{C}[z]=\mathbb{C}[z_1,\ldots, z_m]$ denote the set of multivariate polynomials in the $m$-variables $z_1,\ldots, z_m$ [where $z=(z_1,\ldots,z_m)$] with coefficients in $\mathbb{C}$. Every $f(z)\in \mathbb{C}[z]$ can be written uniquely as a finite linear combination of monomials in these variables (equivalently, the monomials in $\mathbb{C}[z]$ form a basis for the vector space $\mathbb{C}[z]$ over the field $\mathbb{C}$). More precisely, a monomial $z^{\beta}$ in $\mathbb{C}[z]$ is defined as the finite product
\begin{align}
    z^{\beta}=\prod_{j=1}^mz_j^{\beta(j)},
\end{align}
where $\beta\in (\mathbb{N}\cup\{0\})^m$ denotes an $m$-tuple of nonnegative integers with $j$th component $\beta(j)$, for each $j=1,\ldots, m$. Hence, the set of all monomials in $\mathbb{C}[z]$ is
\begin{align}
    \{z^{\beta}:\beta\in (\mathbb{N}\cup\{0\})^m\}.
\end{align}
Thus, if $f(z)\in \mathbb{C}[z]$ then there exists unique complex numbers $a_{\beta}\in \mathbb{C}$, $\beta\in  (\mathbb{N}\cup\{0\})^m$ with $a_\beta = 0$ for all but a finite number of $\beta$ such that $f(z)$ is the finite sum
\begin{align}
    f(z)=\sum_{\beta \in (\mathbb{N}\cup\{0\})^m}a_{\beta}z^{\beta}\label{DefMultiPolyMonomLinearComb}
\end{align}
and, if $f$ is not the zero polynomials (i.e., there exists a $\beta$ such that $a_\beta\not =0$), then the degree of $z_j$ in $f$ is defined in terms of the representation (\ref{DefMultiPolyMonomLinearComb}) as
\begin{align}
    \deg_{z_j}f=\max\{\beta(j):a_{\beta}\not=0\},\label{DefDegreezjInf}
\end{align}
for each $j=1,\ldots, m$.

In particular, notice that since monomials $z^{\beta}$ treated as functions $z^{\beta}:\mathbb{C}^m\rightarrow \mathbb{C}$ are continuous on $\mathbb{C}^m$ (because finite products of continuous functions are continuous and each $z_i:\mathbb{C}^m\rightarrow \mathbb{C}$ is continuous on $\mathbb{C}^m$) then every polynomial $f(z)\in \mathbb{C}[z]$ treated as a function $f:\mathbb{C}^m\rightarrow \mathbb{C}$ is continuous on $\mathbb{C}^m$ (since finite linear combinations of continuous functions are continuous and monomials are continuous functions). This shows that
\begin{align}
    f(z)\in \mathbb{C}[z] \implies f:\mathbb{C}^m\rightarrow \mathbb{C} \text{ is a continuous function.}
\end{align}

Now the key observations for the purposes of this paper, when $m\geq 2$, are the following: if $f(z)\in \mathbb{C}[z]$ and if we choose one variable, say, $z_j$ and fix all the values of rest of the variables, say, $z_i\in \mathbb{C}$ for all $i=1,\ldots, m$ with $i\not=j$ then the function $p$ defined in terms of $f(z)$ by $p(z_j)=f(z)$ is a single variable polynomial in the variable $z_j$, i.e.,
\begin{align}
    z_i\in \mathbb{C}, \forall i\not= j\implies p(z_j)=f(z)\in \mathbb{C}[z_j],
\end{align}
and hence can be written as (\ref{Def_Poly_p}), but in the variable $z_j$, as
\begin{align}
    p(z_j)= a_{n_j}(w)z_j^{n_j} +a_{n_j-1}(w)z_j^{n_j-1}  + \cdots + a_1(w)z_j+ a_0(w),
\end{align}
where 
\begin{align}
    w\in \mathbb{C}^{m-1},\;\;w=(w_1,\ldots, w_{m-1}),\;\;w_i=\left\{ 
\begin{array}{cc}
z_{i}, & \text{if }1\leq i<j, \\ 
z_{i+1}, & \text{if }j\leq i\leq m-1,
\end{array}
\right. 
\end{align}
and $n_j$ is a nonnegative integer that can be taken to be independent of $w$ and $z_j$ [but of course depends on $f$ and $j$, for instance, one can take $n_j=0$ if $f$ is the zero polynomial otherwise, you can take $n_j=\deg_{z_j}f$ as defined in terms of the representation (\ref{DefMultiPolyMonomLinearComb}) of $f$ by (\ref{DefDegreezjInf})].
In particular, if we now allow $w$ to vary then $a_{\ell}(w)$ is a $(m-1)$-variable polynomial in $w$ for each $\ell =0, \ldots , n_j$, i.e.,
\begin{align}
    a_{\ell}(w)\in \mathbb{C}[w],\;\; \forall \ell=0,\ldots, n_j.
\end{align}
As such, since functions of the $w$ are continuous, it follows that
\begin{align}
    a_{\ell}:\mathbb{C}^{m-1}\rightarrow \mathbb{C} \text{ is a continuous function, }\forall \ell=0,\ldots, n_j.
\end{align}

Below we provide an example of a couple multivariate polynomials using our notation.
\begin{example}\label{ExMultvarPolyNotation}
Consider the following examples of a two variable polynomial written in our notation as 
\begin{align*}
    f(z_1,z_2) = \sum_{\beta \in (\mathbb{N}\cup \{0\})^2} a_{\beta} z^{\beta}\in \mathbb{C}[z_1,z_2].
\end{align*}
First, using the example polynomial $f(z_1,z_2)=2z_2 + 4z_1 -3$, then
\begin{align*}
    2z_2 + 4z_1 -3 = a_{(0,0)}z^{(0,0)} + a_{(1,0)}z^{(1,0)} + a_{(0,1)}z^{(0,1)}, 
\end{align*} where $a_{(0,0)} = -3$,  $a_{(1,0)} = 4$, $a_{(0,1)} = 2$, and $a_{(\beta(1),\beta(2))} = 0$ for all other $\beta\in (\mathbb{N}\cup \{0\})^2$. Another example is $f(z_1,z_2)=6z_1^2 - 8z_1z_2$ which can be written in our notation as
\begin{align*}
     6z_1^2 - 8z_1z_2 = a_{(2,0)}z^{(2,0)} + a_{(1,1)}z^{(1,1)} 
\end{align*} where $a_{(2,0)} = 6$, $a_{(1,1)} = -8$, and $a_{(\beta(1),\beta(2))} = 0$ for all other $\beta\in (\mathbb{N}\cup \{0\})^2$.
\end{example}

\subsection{Stability of multivariate polynomials}
Let $\Omega\subseteq \mathbb{C}^m$ be an open set. An $m$-variable polynomial $f\in \mathbb{C}[z]$ [where $z=(z_1,\ldots, z_m)$] is called a $\Omega$\textit{-stable polynomial} \cite{09BBI, 09BBII, 11DW} if $f$ has no zeros in $\Omega$, i.e., $f(z)\not =0$ for all $z\in \Omega$ (in \cite[Sec. 9.1]{11DW} the zero polynomial is considered stable, which is atypical, and is not considered stable according to our definition from \cite{09BBI, 09BBII}). Typically, $\Omega$ is a polydomain, i.e., the Cartesian product of $m$ open connected sets in $\mathbb{C}$. In this case, a $\Omega$-stable polynomial is called, more precisely, a \textit{widest-sense $\Omega$-Hurwitz} polynomial in order to distinguish it from other more restrictive classes of $\Omega$-stable polynomials such as \textit{scattering $\Omega$-Hurwitz} or \textit{strictest-sense $\Omega$-Hurwitz} \cite[p. 619]{90SB}. 

Consider the following important examples. If $\mathbb{D}=\{z\in\mathbb{C}:|z|<1\}$ is the open unit disc in $\mathbb{C}$ then $\Omega= \mathbb{D}^m$ is called the open polydisc and a $\Omega$-stable polynomial is called \textit{Schur stable} \cite{17NB, 11DW} or, more precisely, a \textit{widest-sense Schur} polynomial \cite{90SB, 87FBII}. If $\mathbb{H}=\{z\in\mathbb{C}:\operatorname{Re}z>0\}$ is the open right-half plane in $\mathbb{C}$ then $\Omega= \mathbb{H}^m$ is called the open right polyhalfplane and a $\Omega$-stable polynomial is called \textit{Hurwitz stable} \cite{17NB, 07PB, 11DW} or, more precisely, a \textit{widest-sense Hurwitz} polynomial \cite{90SB,86AF, 87FBI}. In addition, they are referred to as \textit{weakly Hurwitz stable} polynomials \cite{09BBI, 09BBII} or as polynomials with the \textit{half-plane property} \cite{07PB, 04CO}. Other common examples for $\Omega$-stable polynomials come from rotation of $\mathbb{H}$ by an angle $\theta\in \mathbb{R}$ in the complex plane, i.e., $\mathbb{H}_{\theta}=e^{i\theta}\mathbb{H}=\{e^{i\theta}z:z\in \mathbb{H}\}$, and then $\Omega=\mathbb{H}_{\theta}^m$ is called an open polyhalfplane. For instance, when $\theta=\frac{\pi}{2}$ then $\mathbb{H}_{\frac{\pi}{2}}=\{z\in \mathbb{C}:\operatorname{Im}z>0\}$ is the open upper-half plane in $\mathbb{C}$ and $\Omega=\mathbb{H}_{\frac{\pi}{2}}^m$ is called the open upper polyhalfplane and $\Omega$-stable polynomials are just called \textit{stable} in \cite{11RP, 11DW}.

We now consider the most general open set $\Omega\subseteq \mathbb{C}^m$ which is a Cartesian product of open sets in $\mathbb{C}$. Let $B_j$, $1\leq j \leq m$ be open subsets of $\mathbb{C}$. Denote their boundaries by $\partial B_j$, their closure by $\overline{B_j}$, and their Cartesian product $\Omega=\prod_{j=1}^{m} B_j$ by $$\prod_{j=1}^{m} B_j = B_1\times \cdots \times B_m = \left\{(z_1, \ldots ,z_m) \in \mathbb{C}^m \; |\; z_j \in B_j \text{ for } j=1,\ldots , m \right\}.$$ This next theorem from \cite[Lemma 5.2]{11RP} (see also the appendices in \cite{86AF, 84FL}) is a result in multivariate stability theory which describes an important consequence for such $\Omega$-stable polynomials (see, for instance, \cite{86AF}) that has direct applications in the study of hyperbolic polynomials (see, for instance, \cite{11RP}) and passive electrical network theory on realizability and synthesis problems (see, for instance, \cite{77NB, 84FL, 91AK, 68TK, 15DY}).
\begin{theorem}[On the boundary zeros of $\boldsymbol{\Omega}$-stable polynomials]\label{11RPThm}
Suppose a polynomial $f \in \mathbb{C}[z]$, where $z=(z_1,\ldots, z_m)$, such that $f(z)\not=0$ for all $z\in \prod_{j=1}^mB_j$. Then either 
\begin{align} \label{Hypothesis1}
    f(z)&\not=0, \text{ for all }z \in \prod_{j=1}^m\overline{B_j}
\end{align} 
\begin{center}
    or
\end{center}
\begin{align}\label{Hypothesis2}
    f(\alpha)=0, \text{ for some }\alpha\in  \prod_{j=1}^m \overline{B_j}
\end{align} 
in which case one of the following two cases occur: 
\begin{itemize}
   \item[1.]  $\alpha \in \prod_{j=1}^m\partial B_j$.
    \item[2.]  There exists a nonempty set $S\subseteq \{1,\ldots, m\}$ such that 
    \begin{align*}
    \alpha_j\in B_j, \text{ for all } j\in S
    \end{align*} 
    \begin{center}
        and
    \end{center}
  \begin{align*}
  f(z)=0 \text{ for all } z\in\mathbb{C}^n \text{ with } z_j=\alpha_j \text{ for all }j\not\in S.
  \end{align*} [i.e., the function $f(z)$, with the coordinates $z_j=\alpha_j$ fixed for all $j\not\in S$, is identically equal to $0$ as a function of the remaining $z_j$-variables with $j\in S$.]
\end{itemize} 
\end{theorem}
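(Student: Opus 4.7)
The strategy is to partition the coordinates of any boundary zero $\alpha$ into an ``interior set'' $S$ (where $\alpha_j\in B_j$) and a ``true boundary set'' (where $\alpha_j\in\partial B_j$), and then force $f$ to vanish identically on the $S$-coordinates by a one-variable continuity-of-roots argument along a complex line, appealing to Theorem \ref{Thm1}.

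Suppose (\ref{Hypothesis1}) fails, so that some $\alpha\in\prod_j\overline{B_j}$ satisfies $f(\alpha)=0$, and set $S=\{j:\alpha_j\in B_j\}$. Since each $B_j$ is open, $\overline{B_j}\setminus B_j=\partial B_j$, hence $\alpha_j\in\partial B_j$ for every $j\notin S$; moreover $S\neq\{1,\ldots,m\}$, else $\alpha\in\prod_j B_j$ would contradict the hypothesis $f(z)\neq 0$ on $\prod_j B_j$. If $S=\emptyset$ then $\alpha\in\prod_j\partial B_j$, which is case~1 and we are done. I may therefore assume $\emptyset\neq S\subsetneq\{1,\ldots,m\}$ and must establish case~2: $f(z)=0$ for every $z\in\mathbb{C}^m$ with $z_j=\alpha_j$ for all $j\notin S$.

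Fix an arbitrary $u=(u_j)_{j\in S}\in\mathbb{C}^{|S|}$ and define a complex line by $z(t)_j=\alpha_j+t(u_j-\alpha_j)$ for $j\in S$ and $z(t)_j=\alpha_j$ for $j\notin S$. Then $F(t):=f(z(t))$ is a single-variable polynomial with $F(0)=0$; I will show $F\equiv 0$, which, since $u$ is arbitrary, gives $F(1)=0$ and completes case~2. Assume for contradiction that $F\not\equiv 0$, so $t=0$ is a root of $F$ of some finite multiplicity $m_0\geq 1$. For $w=(w_j)_{j\notin S}$, consider the perturbation $F_w(t):=f(\widetilde{z}_w(t))$, where $\widetilde{z}_w(t)_j=z(t)_j$ for $j\in S$ and $\widetilde{z}_w(t)_j=w_j$ for $j\notin S$. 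Because $f$ and each of the coordinate maps are polynomial, $F_w$ lies in a common $P_n(\mathbb{C})$ for some $n$ independent of $w$, and its coefficients (in $t$) are polynomials in $w$; hence $F_w\to F$ coefficient-wise as $w\to(\alpha_j)_{j\notin S}$.

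Because $\alpha_j\in B_j$ and $B_j$ is open for each $j\in S$, the continuous map $t\mapsto\alpha_j+t(u_j-\alpha_j)$ sends some disc $|t|\leq\rho_j$ into $B_j$; take $\rho=\min_{j\in S}\rho_j>0$. Consequently, for any $w$ with $w_j\in B_j$ for all $j\notin S$ we have $\widetilde{z}_w(t)\in\prod_j B_j$ for every $|t|\leq\rho$, so $F_w(t)\neq 0$ throughout that disc by the stability hypothesis on $f$. Now apply Theorem \ref{Thm1} with $q=F$, $\zeta=0$, multiplicity $m_0$, and $\varepsilon=\rho$: there exists $\delta>0$ such that whenever the coefficients of $F_w$ are within $\delta$ of those of $F$, the polynomial $F_w$ has $m_0\geq 1$ roots in $|t|<\rho$. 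Since $\alpha_j\in\partial B_j$ for $j\notin S$, every neighborhood of $\alpha_j$ meets $B_j$, so I can choose $w_j\in B_j$ so close to $\alpha_j$ that the $\delta$-condition is met, yielding a zero of $F_w$ in $|t|<\rho$—in contradiction to $F_w$ being nonvanishing on $|t|\leq\rho$. Hence $F\equiv 0$ and case~2 follows. The main obstacle is the continuity bookkeeping in the passage $F_w\to F$: one must verify that the $t$-coefficients of $F_w$ are polynomial (hence continuous) in $w$ and that $F_w$ lies in a fixed $P_n(\mathbb{C})$, so Theorem \ref{Thm1} applies uniformly; once this is in place, the interplay between openness of $B_j$ for $j\in S$ and the boundary-density of $B_j$ for $j\notin S$ produces the contradiction essentially automatically.
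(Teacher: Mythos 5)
Your argument is correct, and it takes a genuinely different route from the paper. The paper's proof proceeds by induction on the number of variables $m$: it first applies Theorem \ref{Thm1} (perturbing all non-selected coordinates simultaneously) to show that the one-variable slice $z_1\mapsto f(z_1,\boldsymbol{\alpha_2})$ is identically zero, and then invokes the induction hypothesis on the $m$-variable polynomial $\boldsymbol{z_2}\mapsto f(z_1,\boldsymbol{z_2})$ for fixed $z_1\in B_1$ to propagate the vanishing to all of the $S$-coordinates. Your proof dispenses with the induction entirely by restricting $f$ to a complex line $t\mapsto z(t)$ passing through $\alpha$ at $t=0$ and through an arbitrary target point of the affine set $\{z:z_j=\alpha_j,\ j\notin S\}$ at $t=1$; you then apply Theorem \ref{Thm1} with $q=F(t)=f(z(t))$ and $\zeta=0$, perturbing only the boundary coordinates $w_j\in B_j$ to build $p=F_w$. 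Both proofs hinge on the same underlying mechanism — the stability hypothesis forces $F_w$ (respectively $p(z_1)$) to be nonvanishing in a fixed disc while the root-continuity theorem would force a zero there if $F\not\equiv 0$ (respectively $q\not\equiv 0$) — but the induction disappears because the line parameter simultaneously captures every $S$-coordinate direction. The paper's approach has the virtue of being a very explicit, step-by-step peeling-off of variables which matches the way the theorem is often used and visualized (cf.\ Example \ref{Ex4m=2}); your approach is shorter and conceptually cleaner, at the modest cost of the extra bookkeeping you flagged: one must check that $F_w$ lives in a fixed $P_n(\mathbb{C})$ (bounded degree in $t$, uniformly in $w$), that its $t$-coefficients are polynomial in $w$, and that the disc $|t|\le\rho$ on which $z(t)_j\in B_j$ for all $j\in S$ is taken before $u$ is used, as you did. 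Those checks are all routine, so the argument stands.
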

\begin{proof} The proof will be by induction on $m$. We begin by treating the base case $m=1$. Suppose that $f \in \mathbb{C}[z]$, where $z=z_1$ such that $f(z)\not=0$ for all $z\in B_1$. Then obviously either $f(z)\not =0$ for all $z\in \overline{B_1}$ or $f(\alpha)=0$ for some $\alpha\in B_1$. In the latter case, since $\overline{B_1}=B_1\cup \partial B_1$ and by the hypothesis $f(z)\not=0$ for all $z\in B_1$, it follows that $\alpha\in \partial B_1$. Thus, the statement is true for $m=1$. Suppose that the statement is true for some natural number $m$. We will now prove the statement for $m+1$. Hence, let $f \in \mathbb{C}[z]$, where $z=(z_1,\ldots,z_{m+1})$, such that $f(z)\not=0$ for all $z\in \prod_{j=1}^{m+1}B_j$. If \eqref{Hypothesis1} is true we are done. Thus, suppose \eqref{Hypothesis2} is true. Hence, let  $ \alpha\in  \prod_{j=1}^{m+1} \overline{B_j}$ with $f(\alpha)=0$. If $\alpha \in \prod_{j=1}^{m+1}\partial B_j$ then we are done. Hence, suppose $\alpha \not\in \prod_{j=1}^{m+1}\partial B_j$. This implies for some $i\in\{1, \ldots , m+1\}$ that $\alpha_i\not\in \partial B_i$, but since $\alpha_i\in \overline{B_i}= B_i \cup \partial B_i$, it follows that $\alpha_i\in B_i$. Let $S$ be the set of all $j\in \{1, \ldots,m+1\}$ such that $\alpha_j\in B_j$. Hence, if $j\not \in S$ then $\alpha_j\in \partial B_j$. In particular, it follows that $S\subseteq \{1, \ldots,m+1\}$ and $\emptyset \not = S\not = \{1,\ldots , m+1\}$. It remains to prove that if $z\in\mathbb{C}^{m+1}$ and $z_j=\alpha_j$ for all $j\not\in S$ then $f(z)=0$. Choose an arbitrary $i\in S$. Without loss of generality (by permutating the variables) we can assume that $i=1$ and $S=\{1,\ldots, k\}$. Denote $\boldsymbol{z_2}=(z_2,\ldots ,z_{m+1})$, $\boldsymbol{\alpha_2} = (\alpha_2, \ldots , \alpha_{m+1})$, and $f(z_1,\boldsymbol{z_2})=f(z_1,z_2,\ldots, z_{m+1})$. To apply Theorem \ref{Thm1}, we choose the polynomials in the single variable $z_1$ to be 
\begin{align*}
    q=q(z_1)=f(z_1,\boldsymbol{\alpha_2}),\;\;p=p(z_1)=f(z_1,\boldsymbol{z_2}),
\end{align*}
for fixed $\boldsymbol{z_2}\in \mathbb{C}^m$. More precisely, $q$ and $p$ can be expressed as
\begin{align*}
    q(z_1)= f(z_1,\boldsymbol{\alpha_2}) &= a_{n}(\boldsymbol{\alpha_2})z_1^{n} +a_{n-1}(\boldsymbol{\alpha_2})z_1^{n-1}  + \cdots + a_1(\boldsymbol{\alpha_2})z_1+ a_0(\boldsymbol{\alpha_2}), \\
    p(z_1)= f(z_1,\boldsymbol{z_2}) &= a_{n}(\boldsymbol{z_2})z_1^{n} +a_{n-1}(\boldsymbol{z_2})z_1^{n-1}  + \cdots + a_1(\boldsymbol{z_2})z_1+ a_0(\boldsymbol{z_2}),
\end{align*}
where each $\boldsymbol{z_2} \mapsto a_{\ell}(\boldsymbol{z_2})$ is a polynomial in $\boldsymbol{z_2}$ for each $\ell =0, \ldots , n$ and as such they are all continuous at $\boldsymbol{z_2}=\boldsymbol{\alpha_2}$. 
We now want to show $q(z_1)$ is the zero polynomial, i.e., $q(z_1)=0$, using $p(z_1)$ as an approximation to it when $\boldsymbol{z_2}$ is sufficiently near $\boldsymbol{\alpha_2}$. To do this we give a proof by contradiction. Assume that $q(z_1)$ is not the zero polynomial. Our goal is to use Theorem \ref{Thm1} with this polynomial $q$ and the root $\zeta=\alpha_1$ to derive the contradiction. First, as $\alpha_1\in B_1$ and $B_1$ is open in $\mathbb{C}$, then there exists an $\varepsilon>0$ such that if $|z_1-\zeta|<\varepsilon$ then $z_1\in B_1$. Second, by Theorem \ref{Thm1} there exists a $\delta>0$ such that if $\max_{0\leq \ell \leq n}|a_{\ell}(\boldsymbol{\alpha_2})-a_{\ell}(\boldsymbol{z_2})|<\delta$ then $p(z_1)$ has at least one root [denote it by $\lambda=\lambda(\boldsymbol{z_2})$] whose distance from $\zeta$ is less than $\varepsilon$, i.e., $|\lambda - \zeta |<\varepsilon.$ In particular, by continuity of the polynomials $a_{\ell}(\boldsymbol{z_2})$, $\ell=0,\ldots, n$ at $\boldsymbol{z_2}=\boldsymbol{\alpha_2}$, there exists a $\delta^{\prime}$ such that if $|\boldsymbol{z_2}-\boldsymbol{\alpha_2}|<\delta^{\prime}$ then $\max_{0\leq \ell \leq n}|a_{\ell}(\boldsymbol{\alpha_2})-a_{\ell}(\boldsymbol{z_2})|<\delta$ and hence the root $\lambda=\lambda(\boldsymbol{z_2})$ of $p(z_1)$ satisfies $|\lambda - \zeta |<\varepsilon$ and so $\lambda \in B_1$. But then, since $\boldsymbol{\alpha_2}\in\prod _{j=2}^{m+1}\overline{B_j}$, we choose a $\boldsymbol{z_2}\in\prod _{j=2}^{m+1}B_j$ with $|\boldsymbol{z_2}-\boldsymbol{\alpha_2}|<\delta^{\prime}$ such that $0=p(\lambda)=f(\lambda,\boldsymbol{z_2})$ and $(\lambda,\boldsymbol{z_2})\in \prod _{j=1}^{m+1}B_j$, a contradiction of the hypothesis that $f$ is never zero in $\prod _{j=1}^{m+1}B_j$. This proves $q(z_1)=f(z_1,\boldsymbol{\alpha_2})$ is the zero polynomial, as desired.
Consider the polynomial $g(\boldsymbol{z_2})= f(z_1, \boldsymbol{z_2}) \in \mathbb{C}[\boldsymbol{z_2}]$ for a fixed $z_1\in B_1$. It depends on the $m$-variables $\boldsymbol{z_2}=(z_2,\ldots, z_{m+1})$, $g(\boldsymbol{z_2})\not=0$ for all $\boldsymbol{z_2}\in \prod_{j=2}^{m+1}B_j$, and $g(\boldsymbol{\alpha_2})=0$ with $\boldsymbol{\alpha_2}\in \prod _{j=2}^{m+1}\overline{B_j}$, $\alpha_j\in B_j$ for all $j\in S$, i.e., for all $j=1,\ldots, k$, and $\alpha_j\in \partial B_j$ for all $j\not \in S$, i.e., for all $j=k+1, \ldots, m+1$. Applying the induction hypothesis to the polynomial $g$ and its roots $\boldsymbol{\alpha_2}$ there are two possible cases. The first case is $\boldsymbol{\alpha_2}\in\prod _{j=2}^{m+1}\partial B_j$. Then as we showed $q(z_1)=f(z_1, \boldsymbol{\alpha_2})=g(\boldsymbol{\alpha_2})=0$ is the zero polynomial, i.e., $f(z_1, \boldsymbol{\alpha_2})=0$ for all $z\in \mathbb{C}$, and hence the statement is true in this case. The second possible case is $\boldsymbol{\alpha_2}\not\in\prod _{j=2}^{m+1}\partial B_j$, in which case $g(z_2,\ldots, z_k, \alpha_{k+1},\ldots, \alpha_{m+1})=0$ for all $z_2,\ldots, z_k\in \mathbb{C}$. Thus, the second case must be true for every $z_1\in B_1$ implying that $p(z_1)=f(z_1, z_2, \ldots, z_k, \alpha_{k+1}, \ldots , \alpha_{m+1})=0$ for all $z_2, \ldots, z_{k} \in \mathbb{C}$ and all $z_1\in B_1$. As $p(z_1)$ is a polynomial in $z_1$ for each fixed $z_2, \ldots, z_{k} \in \mathbb{C}$ which has an infinite number of roots (for instance, all those in $B_1$) then it must be the zero polynomial so that $f(z_1, z_2, \ldots, z_k, \alpha_{k+1}, \ldots, \alpha_{m+1})=0$ for all $z_1, z_2, \ldots, z_{k} \in \mathbb{C}$. This proves the statement in this case. As we have exhausted all cases, the statement is true for $m+1$. Therefore, by induction the statement is true for all $m$.  \end{proof}

\begin{example}[Illustration of Case (\ref{Hypothesis2}).2 in Theorem \ref{11RPThm} when $\boldsymbol{m=2}$]  \label{Ex4m=2}
Consider a polynomial $f\in \mathbb{C}[z]$, where $z=(z_1,z_2)$, such that $f(z)\not=0$ for all $z\in B_1 \times B_2$ and $f(\alpha)=0$ for some $\alpha=(\alpha_1,\alpha_2) \in \overline{B_1}\times \overline{B_2}$. Suppose, as in Theorem \ref{11RPThm} Case (\ref{Hypothesis2}).2, that $S=\{1\}$, in which case $\alpha_1\in B_1$ and $\alpha_2\in \partial B_2$. The goal of Theorem \ref{11RPThm} when $m=2$ is to answer the question: Does $f(z_1,\alpha_2)=0$ for all $z_1\in \mathbb{C}$? This question is shown in Figure \ref{firstfigz1z2plane} by the question mark to the right of the red dashed line that represents $f(z_1, \alpha_2)$, for all $z_1\in\mathbb{C}$.
\begin{figure}[!t]
   \centering
    \begin{tikzpicture}[scale=1]
        \draw[>=stealth, ->] (0,-0.5) --(0,5) node[above] {$z_2$};
        \draw[>=stealth, ->] (-0.5,0) --(5,0) node[right] {$z_1$};
          \draw[dashed, thick] (1,1) -- (1,4);
           \draw[dashed, thick, color=orange] (3.7,0) -- (3.7,4);
          \draw[dashed, thick] (4,4) -- (4,1);
          \draw[dashed, thick, color=blue] (4,1) -- (1,1);
        \draw[bicolor={blue}{red}] (1,4) -- (4,4);
        \draw[dashed, thick, color=red, <-, >=stealth] (-1.5,4) -- (1,4);
        \draw[dashed, thick, color=red, ->, >=stealth] (4,4) -- (6,4) node[color=black, right, yshift=0.1cm] {\small{$f(z_1, \alpha_2)\overset{?}{=}0$}};
         \draw (1,0) node {$($} (4,0) node {$)$} (2.5,0) node[below, yshift=-0.3cm] {$B_1$};
        \draw (0,1) node[rotate=90] {$($} (0,4) node[rotate=90] {$)$} (0, 2.5) node[left, xshift=-0.3cm] {$B_2$};
        \draw[dashed, color=green, thick] (1.5,0) -- (1.5,2);
        \draw[dashed, color=green, thick] (0,2) -- (1.5,2);
        \draw [color=black, fill=black] (0,4.01)circle(0.05);
        \draw [color=black] (0,4) node[left, xshift=-0.2cm] {\small{$\alpha_2$}};
        \draw [color=black, fill=black] (1.5,0)circle(0.05)  node[below] {\small{$z_1$}};
        \draw [color=black, fill=black] (0,2)circle(0.05)  node[left] {\small{$z_2$}};
        \draw [color=black, fill=black] (3.7,0)circle(0.05);
        \draw [color=black] (3.7,0) node[below] {\small{$\alpha_1$}};
        \draw [color=black, fill=black] (3.7,4)circle(0.05) node[above, xshift=0.68cm] {\small{$\alpha$, $f(\alpha)=0$}};
        \draw [color=black, fill=black] (1.5,2)circle(0.05) node[right] {\small{$z,f(z)\!\not=\!0$}};
    \end{tikzpicture}  
    \caption{In this figure we illustrate $\mathbb{C}^2$ as just a $z_1z_2$-plane when $S=\{1\}$. We can visualize the polynomial $f(z)\not=0$, for any $z_1,z_2\in B_1 \times B_2$ at the point $z$ by the green dashed lines and the zero polynomial $f(\alpha)=0$ when $\alpha_1 \in B_1$ and $\alpha_2 \in \partial B_2$ at the point $\alpha$ where the orange dashed line intersects the red dashed line. The boundaries of $B_1$, $\partial B_1$, are the black dashed lines and the boundaries of $B_2$, $\partial B_2$, are denoted by the blue dashed lines. Finally, the polynomial $f(z_1, \alpha_2)$, for a fixed $z_2=\alpha_2$ when $\alpha_2\in \partial B_2$, is the single variable polynomial in $z_1$ denoted by the red dashed line.}
    \label{firstfigz1z2plane}
\end{figure}
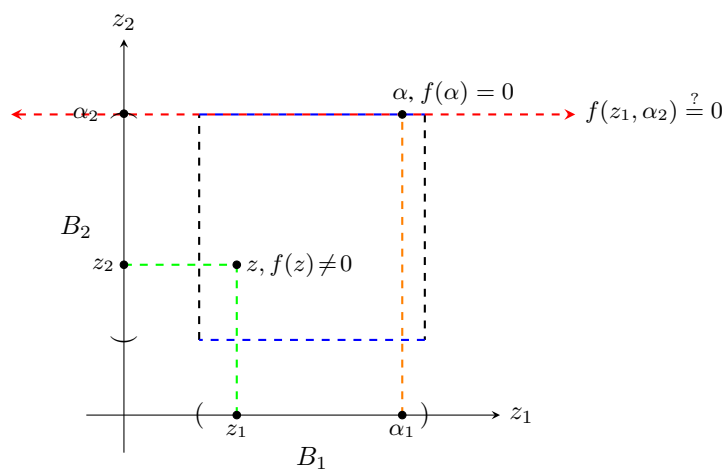
Following the proof of Theorem \ref{11RPThm}, for a fixed $z_2$, we have $z_1 \mapsto f(z_1,z_2)$ is a single variable polynomial in the variable $z_1$. To apply Theorem \ref{Thm1}, we choose the polynomials in the single variable $z_1$ to be 
\begin{align*}
    q=q(z_1)=f(z_1,\alpha_2),\;\;p=p(z_1)=f(z_1,z_2),
\end{align*}
for fixed $z_2\in \mathbb{C}$ so that it is implicitly parameterized by $z_2$. More precisely, $q$ and $p$ can be expressed in the following forms:
\begin{align*}
    q(z_1)= f(z_1,\alpha_2) &= a_n(\alpha_2)z_1^n +a_{n-1}(\alpha_2)z_1^{n-1}  + \cdots + a_1(\alpha_2)z_1+ a_0(\alpha_2), \\
    p(z_1)= f(z_1,z_2) &= a_n(z_2)z_1^n +a_{n-1}(z_2)z_1^{n-1}  + \cdots + a_1(z_2)z_1+ a_0(z_2).
\end{align*}
Note that each $z_2 \mapsto a_{\ell}(z_2)$ is a polynomial in $z_2$ for each $\ell=0, \ldots , n$ and as such they are all continuous at $z_2=\alpha_2$. 

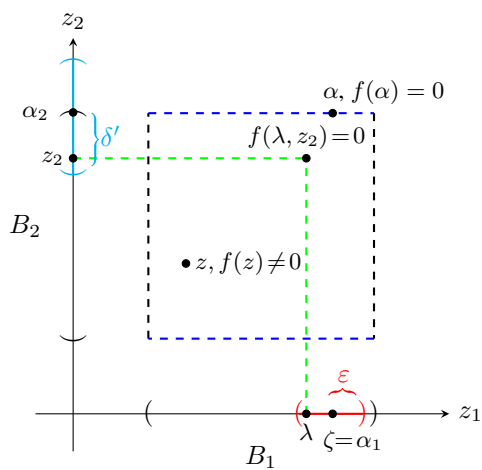
\begin{figure}[h!t]
   \centering
    \begin{tikzpicture}[scale=1]
        \draw[>=stealth, ->] (0,-0.5) --(0,5) node[above] {$z_2$};
        \draw[>=stealth, ->] (-0.5,0) --(5,0) node[right] {$z_1$};
          \draw[dashed, thick] (1,1) -- (1,4);
          \draw[dashed, thick] (4,4) -- (4,1);
           \draw [color=cyan, thick] (0,3.2) -- (0,4.7);
      \draw [color=red, thick] (3,0) -- (3.85,0);
          \draw[dashed, thick, color=blue] (4,1) -- (1,1);
        \draw[color=blue, dashed, thick] (1,4) -- (4,4);
         \draw (1,0) node {$($} (4,0) node {$)$} (2.5,0) node[below, yshift=-0.3cm] {$B_1$};
          \draw[color=red] (3,0) node {$($} (3.85,0) node {$)$};
        \draw (0,1) node[rotate=90] {$($} (0,4) node[rotate=90] {$)$} (0, 2.5) node[left, xshift=-0.3cm] {$B_2$};
        \draw[dashed, color=green, thick] (3.1,0) -- (3.1,3.4);
        \draw[dashed, color=green, thick] (0,3.4) -- (3.1,3.4);
        \draw [color=black, fill=black] (0,4.01)circle(0.05);
        \draw [color=black] (0,4) node[left, xshift=-0.2cm] {\small{$\alpha_2$}};
        \draw [color=black, fill=black] (3.1,0)circle(0.05)  node[below] {\small{$\lambda$}};
        \draw [color=black, fill=black] (0,3.4)circle(0.05)  node[left] {\small{$z_2$}};
        \draw [color=black, fill=black] (3.45,0)circle(0.05);
        \draw [color=black] (3.72,0) node[below, yshift=-0.1cm] {\small{$\zeta \! \!=\!\alpha_1$}};
        \draw [color=black, fill=black] (3.45,4)circle(0.05) node[above, xshift=0.68cm] {\small{$\alpha$, $f(\alpha)=0$}};
        \draw [color=black, fill=black] (1.5,2)circle(0.05) node[right] {\small{$z,f(z)\!\not=\!0$}};
        \draw [pen colour={red}, thick, decorate,
    decoration = {calligraphic brace}] (3.4,0.25) --  (3.76,0.25);
      \draw [color=black, fill=black] (3.1,3.4)circle(0.05) node[above] {\small{$f(\lambda , z_2)\!=\!0$}};
    \draw[color=red] (3.6,0.5) node {$\varepsilon$};
    \draw [pen colour={cyan}, thick, decorate,
    decoration = {calligraphic brace, mirror}] (0.25,3.3) --  (0.25,4);
     \draw[color=cyan] (0.5,3.7) node {$\delta '$};
      \draw[color=cyan] (0,3.2) node[rotate=90] {$($} (0,4.7) node[rotate=90] {$)$};
    \end{tikzpicture}  
    \caption{In this figure we illustrate both the $\delta'$-ball $B(\alpha_2,\delta')$ (cyan) and the $\varepsilon$-ball $B(\zeta , \varepsilon)$ (red) on the $z_1z_2$-plane when $S=\{1\}$. Theorem \ref{Thm1} guarantees, for some $z_2\in B_2\cap B(\alpha_2,\delta')$, at least one $\lambda=\lambda(z_2)$ within the $\varepsilon$-ball $B(\zeta , \varepsilon)\subseteq B_1$ such that $f(\lambda, z_2)=0$. A contradiction that $f(z)\not=0$ for all $z\in B_1 \times B_2$.}
    \label{firstfigz1z2plane2}
\end{figure}

We now want to show that $q(z_1)$ is the zero polynomial, i.e., $q(z_1)=0$, using $p(z_1)$ as an approximation to it when $z_2$ is sufficiently near $\alpha_2$. To do this we give a proof by contradiction. Assume that $q(z_1)$ is not the zero polynomial, i.e., $q(z_1)\not=0$. We will now use Theorem \ref{Thm1} with this polynomial $q$ and the root $\zeta=\alpha_1$. Next, as $\alpha_1\in B_1$ and $B_1$ is open in $\mathbb{C}$, then there exists an $\varepsilon>0$ such that if $|z_1-\zeta|<\varepsilon$ then $z_1\in B_1$. Now by Theorem 1, there exists a $\delta>0$ such that if $\max_{0\leq \ell \leq n}|a_{\ell}(\alpha_2)-a_{\ell}(z_2)|<\delta$ then $p(z_1)$ has at least one root [denote it by $\lambda=\lambda(z_2)$] whose distance from $\zeta$ is less than $\varepsilon$, i.e., $|\lambda - \zeta |<\varepsilon.$ In particular, by continuity of the polynomials $a_{\ell}(z_2)$, $\ell =0,\ldots, n$ at $z_2=\alpha_2$, there exists a $\delta^{\prime}$ such that if $|z_2-\alpha_2|<\delta^{\prime}$ then $\max_{0\leq \ell \leq n}|a_{\ell}(\alpha_2)-a_{\ell}(z_2)|<\delta$ and hence the root $\lambda=\lambda(z_2)$ of $p(z_1)$ satisfies $|\lambda - \zeta |<\varepsilon$ and so $\lambda \in B_1$. But then, since $\alpha_2\in\partial B_2$, we choose a $z_2\in B_2$ with $|z_2-\alpha_2|<\delta^{\prime}$ such that $0=p(\lambda)=f(\lambda,z_2)$ and $(\lambda,z_2)\in B_1\times B_2$ (this is illustrated in Figure \ref{firstfigz1z2plane2} below), a contradiction of the hypothesis that $f$ is never zero in $B_1\times B_2$. This proves $q(z_1)=f(z_1,\alpha_2)$ is the zero polynomial, as desired. This completes the example of Case (\ref{Hypothesis2}).2 in Theorem \ref{11RPThm} when $m=2$.
\end{example}


\begin{thebibliography}{1}

\bibitem{90SB}Basu, S. (1990). On boundary implications of stability and positivity properties of multidimensional systems. \textit{Proceedings of the IEEE.} 78(4): 614-626. DOI: 10.1109/5.54802

\bibitem{87FBII}Basu, S., Fettweis, A. (1987). New results on stable multidimensional polynomials- Part II: Discrete case. \textit{IEEE Trans. Circuits Syst.}, 34(11): 1264-1274. DOI: 10.1109/TCS.1987.1086065

\bibitem{99BO}Bender, C., Orszag, S. (1999). \textit{Advanced Mathematical Methods for Scientistics and Engineers I: Asymptotic Methods and Perturbation Theory.} New York: Springer-Verlag. DOI: 10.1007/978-1-4757-3069-2

\bibitem{09BBI}Borcea, J., Brändén, P. (2009). The Lee-Yang and Pólya-Schur programs. I. Linear operators preserving stability. \textit{Invent. math.} 177: 541-561. DOI: 10.1007/s00222-009-0189-3

\bibitem{09BBII}Borcea, J., Brändén, P. (2009). The Lee-Yang and Pólya-Schur programs. II. Theory of stable polynomials and applications. \textit{Comm. Pure Appl. Math.} 62: 1595-1631. DOI: 10.1002/cpa.20295

\bibitem{77NB}Bose, N. (1977). Problems and Progress in Multidimensional Systems Theory. \textit{Proc. IEEE.} 65(6): 824-840. DOI: 10.1109/PROC.1977.10579

\bibitem{17NB}Bose, N. (2017). \textit{ Applied Multidimensional Systems Theory}, 2nd Ed., Springer. DOI: 10.1007/978-3-319-46825-9

\bibitem{07PB} Br\"andén, P. (2007). Polynomials with the half-plane property and matroid theory. \textit{Advances in Mathematics}. 216(1): 302-320. DOI: 10.1016/j.aim.2007.05.011

\bibitem{04CO}Choe, Y., Oxley, J., Sokal, A., Wagner, D. (2004). Homogeneous multivariate polynomials with the half-plane property. \textit{Advances in Applied Mathematics}. 32(1): 88-187. DOI: 10.1016/S0196-8858(03)00078-2

\bibitem{89CC}Cucker, F., Corbalan, A. G. (1989). An Alternate Proof of the Continuity of the Roots of a Polynomial. \textit{The American Mathematical Monthly}. 96(4): 342-345. DOI:
10.1080/00029890.1989.11972193



\bibitem{86AF}Fettweis, A. (1986). A new approach to Hurwitz polynomials in several variables. \textit{Circuits Systems and Signal Process.} 5(4): 405–417. DOI: 10.1007/BF01599617

\bibitem{84FL}Fettweis, A., Linnenberg, G. (1984). An extension of the maximum-modulus principle for applications to multidimensional networks. \textit{AE\"{U} - International Journal of Electronics and Communications.} 38(2): 131-135. 

\bibitem{87FBI}Fettweis, A., Basu, S. (1987). New results on multidimensional stable polynomials - Part I: Continuous case. \textit{IEEE Trans. Circuits Syst.} 34(10): 1221–1232. DOI: 10.1109/TCS.1987.1086057


\bibitem{20KH}Hirose, K., (2020). Continuity of the roots of a polynomial. \textit{The American Mathematical Monthly} 127(4): 359-363. DOI: 10.1080/00029890.2020.1704166

\bibitem{97HM}Hirst, H. P., Macey, W. T. (1997). Bounding the roots of polynomials. \textit{The College Math. Journal}, 28(4): 292-295. DOI: 10.1080/07468342.1997.11973878

\bibitem{68TK}Koga, T. (1968). Synthesis of finite passive n-ports with prescribed positive real matrices of several variables. \textit{IEEE Trans. Circuit Theory.} 15(1): 2-23. DOI: 10.1109/TCT.1968.1082780

\bibitem{91AK}Kummert, A. (1991). On the synthesis of multidimensional reactance multiports. \textit{IEEE Transactions on Circuits and Systems.} 38(6): 637–642. DOI: 10.1109/31.81858 

\bibitem{66MM}Marden, M. (1966). \textit{Geometry of Polynomials.} 2nd Ed., Providence: American Mathematical Society.


\bibitem{11RP}Pemantle, R. (2012). Hyperbolicity and stable polynomials in combinatorics and probability. In: Jerison, Mazur, Mrowka, Schmid and Stanley, ed(s). \textit{Current Developments in Mathematics, Proceedings of the 2011 conference}. International Press: Somerville, MA., pages 57-124. DOI: 10.4310/CDM.2011.v2011.n1.a2

\bibitem{76WR}Rudin, W. (1976). \textit{Principles of Mathematical Analysis.} 3rd Ed., McGraw-Hill, Inc.

\bibitem{90SS}Stewart, G. W., Sun, J. (1990). \textit{Matrix perturbation theory.} Boston: Academic Press.

\bibitem{77US}Uherka, D. J., Sergott, A. M. (1977). On the continuous dependence of the roots of a polynomial on its coefficients. \textit{The American Mathematical Monthly}, 84(5):368-370. DOI: 10.1080/00029890.1977.11994362

\bibitem{11DW}Wagner, D. (2011). Multivariate stable polynomials: theory and applications. \textit{Bull. Amer. Math. Soc.} 48: 53-84. DOI: 10.1090/S0273-0979-2010-01321-5

\bibitem{15DY}Youla, D. (2015). 
\textit{Theory and Synthesis of Linear Passive Time-Invariant Networks.} Cambridge: Cambridge University Press. DOI: 10.1017/CBO9781316403105

\bibitem{65MZ}Zedek, M. (1965). Continuity and location of zeros of linear combinations of polynomials. \textit{Proc. Amer. Math. Soc.} 16: 78-84. DOI: 10.1090/S0002-9939-1965-0171902-8

\end{thebibliography}
\end{document}